\numberwithin{equation}{section}
\newtheorem{thm}{Theorem}[section]
\newtheorem{cor}[thm]{Corollary}
\newtheorem{lem}[thm]{Lemma}
\newtheorem{prop}[thm]{Proposition}
\theoremstyle{definition}
\newtheorem{rem}[thm]{Remark}
\newtheorem{note}[thm]{Notation}
\newtheorem{se}[thm]{Setup}
\DeclareMathOperator{\NL}{\mathrm{NL}}
\DeclareMathOperator{\Ima}{\mathrm{Im}}
\DeclareMathOperator{\p3}{\mathbb{P}^3}
\DeclareMathOperator{\pr}{\mathrm{pr}}
\DeclareMathOperator{\Spec}{\mathrm{Spec}}
\DeclareMathOperator{\N}{\mathcal{N}}
\DeclareMathOperator{\T}{\mathcal{T}}
\DeclareMathOperator{\I}{\mathcal{I}}
\DeclareMathOperator{\mo}{\mathcal{O}}
\newcommand{\mb}[1]{\mathbb{#1}}
\newcommand{\mc}[1]{\mathcal{#1}}
\newcommand{\mr}[1]{\mathrm{#1}}
\newcommand{\ov}[1]{\overline{#1}}
\begin{document}

\title[Hodge locus and Brill-Noether type locus]{Hodge locus and Brill-Noether type 
locus}

\author[I. Biswas]{Indranil Biswas}

\address{School of Mathematics, Tata Institute of Fundamental
Research, Homi Bhabha Road, Mumbai 400005, India}

\email{indranil@math.tifr.res.in}

\author[A. Dan]{Ananyo Dan}

\address{School of Mathematics, Tata Institute of Fundamental
Research, Homi Bhabha Road, Mumbai 400005, India}

\email{dan@math.tifr.res.in}

\subjclass[2010]{14C30, 14C25}

\keywords{Deformation of linear systems, Hodge locus, divisors, semi-regularity map, 
curve counting}

\date{}

\begin{abstract}
Given a family $\pi\,:\,\mc{X} \,\longrightarrow\, B$ of smooth projective varieties, a
closed fiber $\mc{X}_o$ and an invertible sheaf $\mc{L}$ on $\mc{X}_o$, we compare 
the Hodge locus in $B$ corresponding to the Hodge class $c_1(\mc{L})$ with 
the locus of points $b\,\in\, B$ such that $\mc{L}$ deforms to an invertible sheaf 
$\mc{L}_b$ on $\mc{X}_b$ with at least $h^0(\mc{L})$--dimensional space of global
sections (it is a Brill-Noether type locus associated to $\mc{L}$).
We finally give an application by comparing the Brill-Noether locus to a 
family of curves on a surface passing through a fixed set of points.
\end{abstract}

\maketitle

\section{Introduction}

The base field $k$ is always assumed to be algebraically closed of characteristic zero. 
Consider a family $\pi\,:\,\mc{X} \,\longrightarrow\, B$ of smooth, projective 
varieties with a reference point $o \,\in \, B$ and a Hodge class $\gamma \in 
H^{1,1}(\mc{X}_o,\, \mb{Z})$, where $\mc{X}_b\,:=\,\pi^{-1}(b)$. The Hodge locus 
$\NL(\gamma)\, \subset\, B$ corresponding to $\gamma$ is the space of all $b \,\in\, 
B$ such that $\gamma$ deforms to a Hodge class on $\mc{X}_b$. By Lefschetz 
($1,1$)-theorem, $\gamma\,=\,c_1(\mc{L})$ for some invertible sheaf $\mc{L}$ on 
$\mc{X}_o$. We compare $\NL(\gamma)$ with a Brill-Noether type locus 
$\mc{B}_{\mc{L}}$ associated to $\mc{L}$. More precisely, when $h^0(\mc{L})\,>\,1$,
we define $\mc{B}_{\mc{L}}$ to be the sub-locus of $\NL(\gamma)$ consisting of all points 
$b \,\in\, \NL(\gamma)$ for which $\mc{L}$ deforms to an invertible sheaf $\mc{L}_b$ 
on $\mc{X}_b$ satisfying $h^0(\mc{L}_b) \,\ge\, h^0(\mc{L})$. In some sense, 
$\mc{B}_{\mc{L}}$ consists of those points of $\NL(\gamma)$ for which the entire 
linear system $|\mc{L}|$ deforms. The study of these two loci is related to the 
following classical question:

Given a family $\pi$ as above and a closed fiber $\mc{X}_o$, classify 
effective divisors $D \,\subset\, \mc{X}_o$ satisfying the property: for any 
infinitesimal deformation $\mc{X}_t$ of $\mc{X}_o$ corresponding to $t \,\in\, T_oB$, 
the Hodge class $[D]$ corresponding to $D$ lifts to a Hodge class on $\mc{X}_t$ if 
and only if $D$ lifts to an effective Cartier divisor on $\mc{X}_t$?

This question is still wide open. Bloch proved in \cite{b1}
that semi-regular Cartier divisors satisfy this property. But semi-regularity
is a very strong condition and there are several examples of Cartier divisors that are not 
semi-regular but satisfy this property. In this article, we address the question in terms of 
the Brill-Noether locus associated to $\mc{L}\,=\,\mo_{\mc{X}_o}(D)$. In particular, we prove 
that if the Hodge locus corresponding to $[D]$ coincides with the Brill-Noether
locus $\mc{B}_{\mc{L}}$ for $\mc{L}\,=\,\mo_{\mc{X}_o}(D)$, then $D$ satisfies 
the property in the question (see Theorem \ref{br12}). Although we do not prove,
but one can observe from the text that in most cases, this condition will
in fact exhaustively classify all such divisors.

The other motivation is to study deformation of linear 
systems. This $\mc{B}_{\mc{L}}$ is the correct object to consider for this 
purpose. One could naively define, 
$\mc{B}_{\mc{L}}$ to be the locus of points $b \,\in\, B$ such that every element of the 
linear system $|\mc{L}|$ deforms to an effective divisor on $\mc{X}_b$. 
But this will give us the wrong infinitesimal information, meaning the 
infinitesimal definition of $\mc{B}_{\mc{L}}$ will not agree with its global 
definition. This can be explained using relative Hilbert schemes. More precisely, 
one expects $T_o\mc{B}_{\mc{L}}$ to consist of those tangent vectors $t \,\in\, 
T_o\NL(\gamma)$ for which every effective divisor of $|\mc{L}|$ lifts to an 
effective Cartier divisor on $\mc{X}_t$, where $\mc{X}_t$ is the infinitesimal 
deformation of $\mc{X}_o$ corresponding to $t$. But, quite often, this is not going to be the 
actual tangent space at $o$ to $\mc{B}_{\mc{L}}$. In most cases with
$h^0(\mc{L})\,>\,1$, for any $b \,\in \,\NL(\gamma)$ there exists a deformation of 
$\mc{L}$ to an invertible sheaf $\mc{L}_b$ on $\mc{X}_b$ satisfying 
$h^0(\mc{L}_b)\,>\,1$. In such cases, it is not hard to show that the naive definition 
of $\mc{B}_{\mc{L}}$ will be equal to $\NL(\gamma)$. It is possible that the dimension 
of the linear system $|\mc{L}|$ jumps, i.e., there is an open neighborhood $U\,\subset\, 
\NL(\gamma)$ of $o$ such that for all $u \,\in\, U\backslash\{o\}$ and deformation 
$\mc{L}_u$ of $\mc{L}$ to an invertible sheaf $\mc{L}_u$ on $\mc{X}_u$,
$$
h^0(\mc{L})\,>\, h^0(\mc{L}_u)\, .
$$
But in this case one observes that $T_o\mc{B}_{\mc{L}} 
\,\subsetneqq\, T_o\NL(\gamma)$ even when $\NL(\gamma)$ is smooth at $o$. This would 
mean the dimension of the naive definition of $\mc{B}_{\mc{L}}$ is strictly greater 
than $\dim T_o\mc{B}_{\mc{L}}$, which is not possible (see example
in Section \ref{bsec4}). To 
resolve such ambiguity we use the Brill-Noether type definition of $\mc{B}_{\mc{L}}$.

We now discuss the approach taken in this article.
For any $D \,\in\, |\mc{L}|$, one can define a class $\{D\}
\,\in\, H^0(\mc{H}^1_D(\Omega^1_{\mc{X}_o}))$; this is a classical construction.
In fact, $c_1(\mc{L})$ is the image 
of $\{D\}$ under the natural homomorphism from $H^0(\mc{H}^1_D(\Omega^1_{\mc{X}_o}))
\,\cong\, H^1_D(\Omega_{\mc{X}_o}^1)$ to $H^1(\Omega^1_{\mc{X}_o})$. 
The tangent space $T_o\NL(\gamma)$ is given using the cup-product map
$$\cup c_1(\mc{L})\,:\,H^1(\T_{\mc{X}_o}) \,
\longrightarrow\, H^2(\mo_{\mc{X}_o})\, .$$
In particular, one uses the Kodaira-Spencer map
$\rho_\pi\,:\,T_oB \,\longrightarrow\, H^1(\T_{\mc{X}_o})$
associated to $\pi$. Then $t \,\in\, T_o\NL(\gamma)$ if and only if
$\rho_\pi(t) \cup c_1(\mc{L}) \,\longmapsto\, 0$.
Analogous to the cup-product map, one can define an inner
multiplication \[\lrcorner \{D\}\,:\,H^1(\T_{\mc{X}_o}) \,\longrightarrow\,
H^2_D(\mo_{\mc{X}_o})\, .\]
 We prove that $t \,\in\, \T_o\mc{B}_{\mc{L}}$ if and only if $\rho_{\pi}(t)
\lrcorner \{D\}\,=\,0$ (Proposition \ref{br3}). As $\mc{L}$ deforms along $\NL(\gamma)$, it is possible
 that the dimension of the space of its global sections drops, sometimes to zero. In such
cases $\mc{B}_{\mc{L}}$ and $\NL(\gamma)$ differ, and so do their tangent spaces. 
 Using Lefschetz ($1,1$)-theorem and deformation theory, there exists an invertible sheaf
$\widetilde{\mc{L}}$ on $\pi^{-1}(\NL(\gamma))$ satisfying
$\widetilde{\mc{L}}|_{\mc{X}_o} \,\cong\, \mc{L}$. We first prove:

\begin{thm}[{Theorem \ref{br12}}]\label{br13}
For $\gamma\,=\,c_1(\mc{L}) \,\in\, H^{1,1}({\mc{X}_o},\mb{Z})$, if
$h^0(\widetilde{\mc{L}}|_{\mc{X}_b})\,=\,h^0(\widetilde{\mc{L}}|_{\mc{X}_o})$ for all $b
\,\in \,B$, then $\NL(\gamma)\,=\,\mc{B}_{\mc{L}}$ and
$T_{o}\NL(\gamma)\,=\,T_{o}\mc{B}_{\mc{L}}$.
\end{thm}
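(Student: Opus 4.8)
The plan is to obtain the set-theoretic equality $\NL(\gamma)=\mc{B}_{\mc{L}}$ --- which is almost immediate from the existence of $\widetilde{\mc{L}}$ --- and then the equality of tangent spaces, which carries the real content. The inclusion $\mc{B}_{\mc{L}}\subseteq\NL(\gamma)$ is part of the definition; conversely, for $b\in\NL(\gamma)$ the sheaf $\widetilde{\mc{L}}|_{\mc{X}_b}$ deforms $\mc{L}\cong\widetilde{\mc{L}}|_{\mc{X}_o}$ and the hypothesis gives $h^0(\widetilde{\mc{L}}|_{\mc{X}_b})=h^0(\widetilde{\mc{L}}|_{\mc{X}_o})=h^0(\mc{L})$, so $b\in\mc{B}_{\mc{L}}$. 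Since moreover $b\mapsto h^0(\widetilde{\mc{L}}|_{\mc{X}_b})$ is constant, cohomology and base change make $\pi_\ast\widetilde{\mc{L}}$ locally free of rank $h^0(\mc{L})$ on $\NL(\gamma)$ with formation commuting with base change, so $\widetilde{\mc{L}}$ is a universal witness for the condition defining $\mc{B}_{\mc{L}}$; hence $\mc{B}_{\mc{L}}\hookrightarrow\NL(\gamma)$ admits a section, the two subschemes of $B$ coincide, and in particular $T_o\NL(\gamma)=T_o\mc{B}_{\mc{L}}$.

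More in the spirit of the infinitesimal picture, one can also verify the tangent space equality directly. Fix $D\in|\mc{L}|$; by Proposition~\ref{br3}, $t\in T_o\mc{B}_{\mc{L}}$ iff $\rho_\pi(t)\lrcorner\{D\}=0$ in $H^2_D(\mo_{\mc{X}_o})$, whereas $t\in T_o\NL(\gamma)$ iff $\rho_\pi(t)\cup c_1(\mc{L})=0$ in $H^2(\mo_{\mc{X}_o})$. Because $c_1(\mc{L})$ is the image of $\{D\}$ under $H^1_D(\Omega^1_{\mc{X}_o})\to H^1(\Omega^1_{\mc{X}_o})$, functoriality gives $j\circ(\lrcorner\{D\})=\cup\,c_1(\mc{L})$ for the natural map $j\colon H^2_D(\mo_{\mc{X}_o})\to H^2(\mo_{\mc{X}_o})$, whence $T_o\mc{B}_{\mc{L}}\subseteq T_o\NL(\gamma)$ at once. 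For the reverse inclusion take $t\in T_o\NL(\gamma)$ and put $\alpha=\rho_\pi(t)\lrcorner\{D\}$; then $j(\alpha)=\rho_\pi(t)\cup c_1(\mc{L})=0$, so $\alpha\in\ker j$, which by the local cohomology long exact sequence is the cokernel of $H^1(\mc{X}_o,\mo_{\mc{X}_o})\to H^1(\mc{X}_o\setminus D,\mo_{\mc{X}_o})\cong\varinjlim_n H^1(\mc{X}_o,\mo_{\mc{X}_o}(nD))$. Identifying $\alpha$ with the obstruction to extending the global sections of $\mc{L}$ along the first-order deformation $\mc{X}_t$ attached to $t$, one uses the constancy of $h^0$ --- via the base change of $\pi_\ast\widetilde{\mc{L}}$, or, read cohomologically, through the exact sequences $0\to\mo_{\mc{X}_o}((n-1)D)\to\mo_{\mc{X}_o}(nD)\to\mo_D(nD)\to 0$ --- to force every such section to extend, so $\alpha=0$ and $t\in T_o\mc{B}_{\mc{L}}$.

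I expect the genuine obstacle to be precisely this last step, namely showing that constancy of $h^0(\widetilde{\mc{L}}|_{\mc{X}_b})$ annihilates $\alpha\in\ker j$. The cleanest route is the base-change argument turning $\widetilde{\mc{L}}$ into a universal object; its one delicate feature is that $\NL(\gamma)$ need not a priori be reduced, so one must appeal to the precise form of Grothendieck's cohomology and base change theorem, or else establish the surjectivity $H^1(\mc{X}_o,\mo_{\mc{X}_o})\twoheadrightarrow\varinjlim_n H^1(\mc{X}_o,\mo_{\mc{X}_o}(nD))$ by hand. Once that is settled, the equality of loci together with the two tangent space inclusions completes the proof.
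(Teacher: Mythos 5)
Your proposal is correct and takes essentially the same route as the paper: the set-theoretic equality is read off from the hypothesis, and the tangent-space equality comes from promoting the constancy of $h^0$ to first-order deformations along $\NL(\gamma)$ and then concluding via the lifting criterion (Propositions \ref{br3} and \ref{br15}) and the factorization of $\cup\, c_1(\mc{L})$ through $\lrcorner\{D\}$, exactly as in your second paragraph. The ``delicate feature'' you flag is precisely the step the paper itself asserts without further comment (it states $\dim_{k[\epsilon]/(\epsilon^2)}H^0(\mc{L}_t)=h^0(\mc{L})$ for all $t\in T_o\NL(\gamma)$ directly from the pointwise constancy, in the spirit of the Grauert-type base change used in Remark \ref{br11}), so your argument is at the same level of rigor as the paper's.
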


We then produce an example of a family $\pi$ as above and an invertible sheaf 
$\mc{L}$ on $\mc{X}_o$ for which the hypothesis of Theorem \ref{br13} fails. For 
this example, $\mc{B}_{\mc{L}}$ is \emph{properly} contained in $\NL(\gamma)$
and so is their respective tangent spaces (see Theorem \ref{br1} and Theorem 
\ref{br14}). The point to note is that the failure of the hypothesis of 
Theorem \ref{br13} does not a-priori imply that $T_o\mc{B}_{\mc{L}} \,\not=\, 
T_o\NL(\gamma)$. It is possible for all \emph{first order} infinitesimal deformation 
$\mc{X}_t$ of $\mc{X}_o$ corresponding to $t \,\in \,T_o\NL(\gamma)$, one has 
$h^0(\widetilde{\mc{L}}|_{\mc{X}_t}) \,\ge\, h^0(\mc{L})$. Of course, there exists 
higher order infinitesimal deformations $\mc{X}_{t_n}$ of $\mc{X}_o$, along 
$\NL(\gamma)$ with $h^0(\widetilde{\mc{L}}|_{\mc{X}_{t_n}})\,<\, h^0(\mc{L})$.
 
We finally produce a family $\pi\,:\,\mc{X}\,\longrightarrow\, B$ such that
$T_o\mc{B}_{\mc{L}} \,\not=\, 
T_o\NL(\gamma)$. To produce such a family we start with a smooth, projective variety 
$X$ and an invertible sheaf $\mc{L}$ on $X$ such that the set of base points $B$ of 
$\mc{L}$ is zero dimensional. Choose a point $p \,\in\, B$, and define $B_p
\,:=\,B \backslash 
\{p\}$. We produce a flat family $\pi\,:\,\mc{X}\,\longrightarrow\, X\backslash B_p$
such that for all 
$q \,\in\, X\backslash B_p$, the fiber $\pi^{-1}(q)$ is the blow up of $X$ at
$B_p \cup q$. Denote 
by $E_q$ the exceptional divisor. There exists an invertible sheaf $\mc{M}$ on 
$\mc{X}$ such that $\mc{M}_q\,:=\,\mc{M}|_{\mc{X}_q}\,=\,\mc{L}(-E_q)$
for all $q \,\in \,X\backslash B_p$. We prove that

\begin{thm}[{Theorem \ref{br1} and Theorem \ref{br14}}]\label{br17}
Let $\gamma\,=\,c_1(\mc{M}_p)$. Then, $\dim \mc{B}_{\mc{M}_p}\,<\,\dim \NL(\gamma)$
and $T_p\NL(\gamma)\,\not=\, T_p\mc{B}_{\mc{M}_p}$.
\end{thm}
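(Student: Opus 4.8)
The plan is to exhibit, for the specific family $\pi\,:\,\mc{X}\,\longrightarrow\, X\backslash B_p$ constructed above, a concrete situation in which the dimension drop and the tangent-space strict inclusion can both be verified by direct computation. First I would fix a convenient ambient $X$ and $\mc{L}$: the natural choice is $X\,=\,\p3$ (or a surface) with $\mc{L}$ a linear system whose base locus $B$ is a finite scheme of length $\ge 2$, so that $B_p\,=\,B\backslash\{p\}$ is nonempty; one wants $h^0(\mc{L})>1$ and, crucially, that blowing up $B_p\cup q$ and twisting by $-E_q$ produces a sheaf $\mc{M}_q$ whose global sections are exactly the elements of $|\mc{L}|$ vanishing on $B_p\cup q$. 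The key point is that for $q\neq p$ the extra base point $q$ imposes one genuinely independent condition on $|\mc{L}(-E_{B_p})|$, so $h^0(\mc{M}_q)\,=\,h^0(\mc{M}_p)-1$ for $q$ in a dense open set, while $h^0(\mc{M}_p)$ itself is the "jumped up" value; this is precisely the failure of the hypothesis of Theorem \ref{br13}.

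Next I would compute $\NL(\gamma)$ for $\gamma\,=\,c_1(\mc{M}_p)$. Since $\mc{M}$ is defined on the total space $\mc{X}$, the class $\gamma$ remains Hodge on every fiber $\mc{X}_q$, hence $\NL(\gamma)\,=\,X\backslash B_p$ is the whole base, and in particular $T_p\NL(\gamma)\,=\,T_p(X\backslash B_p)\,=\,T_pX$, which has dimension $\dim X$. Then I would identify $\mc{B}_{\mc{M}_p}$: by definition it is the locus of $q$ where $\mc{M}_p$ deforms to an invertible sheaf on $\mc{X}_q$ with $h^0\,\ge\, h^0(\mc{M}_p)$. The only available deformation is $\mc{M}_q$ (up to the action of $\operatorname{Pic}^0$, which one checks is trivial or irrelevant here), and $h^0(\mc{M}_q)\,<\,h^0(\mc{M}_p)$ for all $q$ in a dense open set; so $\mc{B}_{\mc{M}_p}$ is a proper closed subset of $X\backslash B_p$, giving $\dim\mc{B}_{\mc{M}_p}\,<\,\dim\NL(\gamma)$. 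One must be slightly careful that there is no exotic deformation of $\mc{M}_p$ restoring sections — this is where one invokes that $\operatorname{Pic}(\mc{X}_q)$ is discrete over the relevant classes (the fibers being blow-ups of a fixed $X$ with, say, $H^1(\mo)=0$), so the deformations of $\mc{M}_p$ along $\NL(\gamma)$ are accounted for by $\mc{M}$ alone.

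For the tangent-space statement I would use Proposition \ref{br3}: $t\,\in\, T_p\mc{B}_{\mc{M}_p}$ iff $\rho_\pi(t)\lrcorner\{D\}\,=\,0$ in $H^2_D(\mo_{\mc{X}_p})$ for (a suitable, or every) $D\,\in\,|\mc{M}_p|$, whereas $t\,\in\, T_p\NL(\gamma)$ iff the image $\rho_\pi(t)\cup c_1(\mc{M}_p)\,=\,0$ in $H^2(\mo_{\mc{X}_p})$. Since $c_1(\mc{M}_p)$ is the image of $\{D\}$ under $H^2_D(\mo_{\mc{X}_p})\,\longrightarrow\, H^2(\mo_{\mc{X}_p})$, one always has $T_p\mc{B}_{\mc{M}_p}\,\subseteq\, T_p\NL(\gamma)$; to get strictness I would produce a tangent direction $t$ — namely the Kodaira–Spencer direction coming from moving the extra base point $q$ toward $p$, i.e. the direction in $T_pX$ that "wants to merge $q$ with $p$" — for which the refined obstruction $\rho_\pi(t)\lrcorner\{D\}$ is nonzero in the local cohomology group even though its image in $H^2(\mo)$ vanishes. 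Concretely this amounts to showing the connecting map $H^1(\mo_{\mc{X}_p})\,\longrightarrow\, H^2_D(\mo_{\mc{X}_p})$ (or the relevant piece of the local-to-global sequence for the divisor $D$) has the class $\rho_\pi(t)\lrcorner\{D\}$ landing in its kernel of the map to $H^2(\mo)$ but not being zero; equivalently, that the first-order deformation $\mc{X}_t$ does not lift all of $|\mc{M}_p|$ as effective divisors, which one checks because the section of $\mc{M}_p$ that is "forced" by $p$ alone fails to extend once the deformation parameter records the infinitesimal collision with $q$.

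The main obstacle I anticipate is the last point: verifying that the inner-multiplication obstruction $\rho_\pi(t)\lrcorner\{D\}$ is genuinely nonzero for the chosen $t$ and $D$. This requires an honest computation in the local cohomology group $H^2_D(\mo_{\mc{X}_p})$ — identifying $\{D\}$ explicitly via the standard \v{C}ech or residue description on the blow-up $\mc{X}_p$, computing the Kodaira–Spencer class $\rho_\pi(t)\,\in\, H^1(\T_{\mc{X}_p})$ of the family (which is itself a blow-up family, so one needs the description of $\T_{\mc{X}_p}$ in terms of $\T_X$ and the exceptional divisors), and pairing them. I expect the cleanest route is to reduce, via the exact sequence relating $\mc{H}^1_D(\Omega^1)$, $H^2_D(\mo)$ and $H^2(\mo)$, to showing that a certain section of a normal-bundle-type sheaf along $D$ (coming from the infinitesimal motion of the base point) is non-liftable; the dimension count from the global picture ($\dim\mc{B}_{\mc{M}_p}<\dim\NL(\gamma)$, already established) then serves as an independent sanity check, and in fact forces $T_p\mc{B}_{\mc{M}_p}\neq T_p\NL(\gamma)$ outright provided one additionally knows $\NL(\gamma)$ is smooth at $p$ of dimension $\dim X$, which is immediate here since $\NL(\gamma)=X\backslash B_p$.
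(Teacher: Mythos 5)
Your global/dimension argument is essentially the paper's proof of Theorem \ref{br1}: restrict the sequence $0\to\mc{M}\to p_1^*\mc{L}\to p_1^*\mc{L}\otimes\mo_E\to 0$ to the fiber over $q$, identify $H^0(\mc{M}_q)$ with the kernel of the evaluation map $H^0(\mc{L})\to H^0(\mc{L}\otimes\mo_{E_0(q)})$, and note that this evaluation is nonzero exactly when $q\neq p$ (in fact the drop $h^0(\mc{M}_q)<h^0(\mc{M}_p)$ holds for \emph{every} $q\neq p$, not just on a dense open set), so $\mc{B}_{\mc{M}_p}$ is a proper closed subset of $\NL(\gamma)=Z$. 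That part is fine.

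The gap is in the tangent-space statement, which is the actual content of Theorem \ref{br14}. First, your fallback is invalid: knowing $\dim\mc{B}_{\mc{M}_p}<\dim\NL(\gamma)$ with $\NL(\gamma)$ smooth does \emph{not} force $T_p\mc{B}_{\mc{M}_p}\neq T_p\NL(\gamma)$, because $T_p\mc{B}_{\mc{M}_p}$ is the functorial tangent space of Proposition \ref{br15} (first-order liftability of the full linear system) and can strictly exceed $\dim_p\mc{B}_{\mc{M}_p}$; the paper explicitly warns that there are families where $h^0$ drops at every nearby fiber yet is preserved by all first-order deformations, so the dimension inequality alone proves nothing about tangent spaces --- this is precisely the subtlety the theorem addresses. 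Second, your honest route (computing $\rho_\pi(t)\lrcorner\{D\}$ in $H^2_D(\mo_{\ov{Y}_p})$ by pairing an explicit Kodaira--Spencer class with a residue representative of $\{D\}$) is exactly the step you flag as the ``main obstacle'' and never carry out, so the proof is incomplete at its crucial point. The paper avoids any such local-cohomology computation: since $p$ is a \emph{reduced} base point, one picks $s\in H^0(\mc{L})$ with $s_p=f_sg_s$, $f_s\in m_p\setminus m_p^2$, $g_s\notin m_p\mc{L}_p$, defines $\phi:\mo_{Z,p}\to k(p)[\epsilon]/(\epsilon^2)$ by $f_s\mapsto\epsilon$ (and the remaining parameters to $0$), and checks that the first-order evaluation map $\rho(\phi)$ is nonzero because $s\otimes 1$ evaluates to $\epsilon g_s\neq 0$ at the moving point; combined with Remark \ref{br11} (constancy of $h^0(p_1^*\mc{L})$ to first order) this gives $\dim_{k[\epsilon]/(\epsilon^2)}H^0(\mc{M}_t)<h^0(\mc{M}_p)$, hence $t\notin T_p\mc{B}_{\mc{M}_p}$ while trivially $t\in T_pZ=T_p\NL(\gamma)$; the existence of a reduced $D$ with $t\lrcorner\{D\}\neq 0$ then comes for free from Proposition \ref{br15}. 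To complete your proposal you would need either this first-order evaluation argument (note it uses the multiplicity-one hypothesis on $p$, which your sketch never invokes) or an actual computation of the pairing you postpone.
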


Finally, given a smooth projective surface $X$, an invertible sheaf $\mc{L}$ and a 
positive integer $n$, we study the locus of points in $X$ such that there exists a 
family of smooth projective curves in the linear system $|\mc{L}|$ of dimension at 
least $n$, passing through these points. We prove that the locus of such points is 
a Brill-Noether type locus (see Theorem \ref{br18}).

\section{Preliminaries}

We recall some basics on local cohomology groups.

Let $X$ be a topological space, $Y \,\subset\, X$ a closed subspace and $\mc{F}$ a
sheaf of abelian groups on $X$. Let $\Gamma_Y(X,\,\mc{F})$ denote the \emph{group of
sections of} $\mc{F}$ \emph{with support on} $Y$; it is also the subgroup of
$\Gamma(X,\,\mc{F})$ consisting of all sections whose support is contained in $Y$. Now,
$\Gamma_Y(X,\, -)$ is a left exact functor from the category of abelian sheaves on $X$
to abelian groups. We denote the right derived functor of $\Gamma_Y(X,\, -)$ by
$H^i_Y(X,\,-)$. They are the \emph{cohomology groups of } $X$ \emph{ with
support in } $Y$ and coefficients in a given sheaf.

For $\mc{F}$ as above, let $\underline{\Gamma}_Y(\mc{F})$ be the sheaf which associates
to an open subset $U$ the abelian group $\Gamma_{Y \cap U}(U,\,\mc{F}|_U)$.
Denote by $\mc{H}^i_Y(\mc{F})$ the associated right derived functor.

Using \cite[Proposition $1.2$]{grh} one notices that $\mc{H}^i_Y(\mc{F})$ is in fact
the sheaf associated to the presheaf which associates the abelian
group $H^i_{Y \cap U}(U,\,\mc{F}|_U)$ to an open subset $U \,\subset\, X$.

\begin{lem}[{\cite[Corollary $1.1.9$]{grh}}]\label{ph14}
Let $\mc{F}$ be a quasi-coherent sheaf on $X$. Let $U\,:=\, X-Y$ be the complement
with $j\,:\,U \,\hookrightarrow\, X$ the inclusion. There is a long exact sequence
\[
0 \,\longrightarrow\, H^0_Y(X,\,\mc{F})\,\longrightarrow\, H^0(X,\,\mc{F})\,\longrightarrow\,
 H^0(U,\,\mc{F}|_U) \,\longrightarrow\, H^1_Y(X,\,\mc{F})
\]
\[
\longrightarrow\, H^1(X,\,\mc{F})\,\longrightarrow\, H^1(U,\,\mc{F}|_U)
\,\longrightarrow\,H^2_Y(X,\mc{F})\,\longrightarrow\, \cdots \, .\]
Similarly, there is a short exact sequence
 \[0 \,\longrightarrow\, \mc{H}^0_Y(X,\,\mc{F})
\,\longrightarrow\, \mc{H}^0(X,\,\mc{F})\,\longrightarrow\, \mc{H}^0(U,\,\mc{F}|_U)
\,\stackrel{\delta}{\longrightarrow}\, \mc{H}^1_Y(X,\,\mc{F}) \,\longrightarrow\, 0\]
and $\mc{H}^{i+1}_Y(\mc{F}) \,\cong\, R^i j_*(\mc{F}|_U)$ for all $i\,>\,0$.
\end{lem}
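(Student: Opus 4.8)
The plan is to derive both exact sequences, together with the isomorphism $\mc{H}^{i+1}_Y(\mc{F})\,\cong\, R^ij_*(\mc{F}|_U)$, from a single short exact sequence of complexes coming from an injective resolution. Fix an injective resolution $\mc{F} \,\longrightarrow\, I^\bullet$ in the category of abelian sheaves on $X$. The two standard facts I would invoke immediately are: (a) an injective sheaf of abelian groups is flasque, so each $I^n$ is flasque; and (b) for the open immersion $j\,:\,U\,\hookrightarrow\, X$, restriction $j^*$ has an exact left adjoint $j_!$, so $j^*$ carries injectives to injectives, and in any case flasqueness is preserved by restriction to an open set. Hence $I^\bullet|_U$ is a flasque (indeed injective) resolution of $\mc{F}|_U$ on $U$, so it computes $H^\bullet(U,\,\mc{F}|_U)$, and $j_*(I^\bullet|_U)$ computes $R^\bullet j_*(\mc{F}|_U)$.

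Next I would record the two surjectivity statements on which everything rests. Since $\Gamma_Y(X,\,\mc{G}) \,=\, \ker\bigl(\Gamma(X,\,\mc{G}) \to \Gamma(U,\,\mc{G}|_U)\bigr)$ and each $I^n$ is flasque, the restriction $\Gamma(X,\,I^n) \to \Gamma(U,\,I^n|_U)$ is surjective, so
\[
0 \,\longrightarrow\, \Gamma_Y(X,\,I^\bullet) \,\longrightarrow\, \Gamma(X,\,I^\bullet) \,\longrightarrow\, \Gamma(U,\,I^\bullet|_U) \,\longrightarrow\, 0
\]
is a short exact sequence of complexes of abelian groups; its long exact cohomology sequence is, by the identifications above and the fact that $\Gamma_Y(X,\,I^\bullet)$ computes $H^\bullet_Y(X,\,\mc{F})$ by definition of the derived functor, precisely the first displayed sequence of the lemma. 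For the sheaf version, $\underline{\Gamma}_Y(\mc{G}) \,=\, \ker(\mc{G} \to j_*j^*\mc{G})$, and a stalk computation shows $\mc{G} \to j_*j^*\mc{G}$ is surjective when $\mc{G}$ is flasque: at $x\in U$ the map is the identity on $\mc{G}_x$, while at $x\in Y$ it is $\varinjlim_{x\in V}\Gamma(V,\mc{G}) \to \varinjlim_{x\in V}\Gamma(V\cap U,\mc{G})$, surjective because each $\Gamma(V,\mc{G}) \to \Gamma(V\cap U,\mc{G})$ is. Hence
\[
0 \,\longrightarrow\, \underline{\Gamma}_Y(I^\bullet) \,\longrightarrow\, I^\bullet \,\longrightarrow\, j_*(I^\bullet|_U) \,\longrightarrow\, 0
\]
is a short exact sequence of complexes of sheaves.

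Finally I would pass to the long exact cohomology sequence of this last sequence. Here $\mc{H}^i\bigl(\underline{\Gamma}_Y(I^\bullet)\bigr) \,=\, \mc{H}^i_Y(\mc{F})$; the complex $I^\bullet$ has cohomology $\mc{F}$ in degree $0$ and $0$ in positive degrees, being a resolution; and $\mc{H}^i\bigl(j_*(I^\bullet|_U)\bigr) \,=\, R^ij_*(\mc{F}|_U)$, which is $j_*j^*\mc{F}$ for $i=0$. The low-degree terms then give the stated four-term exact sequence $0 \to \mc{H}^0_Y(\mc{F}) \to \mc{F} \to j_*j^*\mc{F} \to \mc{H}^1_Y(\mc{F}) \to 0$, and for $i\ge 1$ the vanishing of $\mc{H}^i(I^\bullet)$ and $\mc{H}^{i+1}(I^\bullet)$ forces $R^ij_*(\mc{F}|_U) \,\cong\, \mc{H}^{i+1}_Y(\mc{F})$.

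I do not anticipate a genuine obstacle: the argument is purely formal once the two surjectivity statements are available, and the only points requiring care are the standard homological facts (injective $\Rightarrow$ flasque; flasqueness and injectivity stable under restriction to opens; flasque sheaves acyclic for $\Gamma(X,-)$, $\Gamma(U,-)$ and hence for computing cohomology) together with the elementary stalk computation for $\mc{G} \to j_*j^*\mc{G}$. Should one prefer, the global long exact sequence can instead be deduced from the sheaf statement via the Grothendieck spectral sequence for the composition $\Gamma_Y(X,-) \,=\, \Gamma(X,-)\circ\underline{\Gamma}_Y(-)$, but the direct route above is the shortest and is exactly the proof of \cite[Corollary~1.1.9]{grh}.
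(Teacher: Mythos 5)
Your proof is correct: the paper offers no argument for this lemma, citing it directly from Hartshorne's \emph{Local Cohomology} notes, and your derivation via an injective (hence flasque) resolution, the surjectivity of $\Gamma(X,I^n)\to\Gamma(U,I^n|_U)$ and of $I^n\to j_*(I^n|_U)$, and the two resulting short exact sequences of complexes is precisely the standard proof given in that reference. The only remarks worth adding are that quasi-coherence of $\mc{F}$ is never used (the statement holds for any abelian sheaf), and that $\mc{H}^0(X,\mc{F})$ and $\mc{H}^0(U,\mc{F}|_U)$ in the displayed sheaf sequence must be read as $\mc{F}$ and $j_*(\mc{F}|_U)$, which is exactly how you interpret them.
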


\begin{prop}\label{ph15}
Let $X$ be a scheme, $Z$ a local complete intersection subscheme in $X$ and $\mc{F}$ a sheaf of abelian groups on $X$.
Then the spectral sequence with terms $E_2^{p,q}\,=\,H^p(X,\,\mc{H}_Z^q(X,\,
\mc{F}))$ converges to $H_Z^{p+q}(X,\,\mc{F})$. Furthermore, if $\mc{F}$ is a locally
free $\mo_X$--module, then $H^{p+q}_Z(X,\,\mc{F})\,\cong\, H^p(X,\,\mc{H}^q_Z(X,\,
\mc{F}))$, where $q$ is the codimension of $Z$ in $X$ and $p \,\ge\, 0$.
\end{prop}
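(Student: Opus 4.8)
The plan is to read the first assertion as an instance of the Grothendieck spectral sequence for a composite of functors, and the second as its degeneration once the local cohomology sheaves are shown to be concentrated in a single degree. For the spectral sequence, note that $\Gamma_Z(X,-) = \Gamma(X,-)\circ\underline{\Gamma}_Z(-)$ as functors from abelian sheaves on $X$ to abelian groups. To obtain the Grothendieck spectral sequence of this composite I would check that $\underline{\Gamma}_Z$ sends injective abelian sheaves to $\Gamma(X,-)$-acyclic sheaves: an injective abelian sheaf is flasque, the functor $\underline{\Gamma}_Z$ preserves flasqueness (a section with support in $Z\cap V$ extends, by flasqueness, to a section over a larger open whose support still lies in $Z$), and flasque sheaves are $\Gamma(X,-)$-acyclic. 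This yields a convergent spectral sequence $E_2^{p,q} = H^p\bigl(X,\, R^q\underline{\Gamma}_Z(\mc{F})\bigr) \Rightarrow R^{p+q}\Gamma_Z(X,\mc{F})$, which is the asserted one since $R^q\underline{\Gamma}_Z(\mc{F}) = \mc{H}^q_Z(X,\mc{F})$ and $R^{p+q}\Gamma_Z(X,\mc{F}) = H^{p+q}_Z(X,\mc{F})$. This step uses nothing about $Z$ beyond its being closed.

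The point where the local complete intersection hypothesis enters is the vanishing of the sheaves $\mc{H}^q_Z(X,\mc{F})$ away from the codimension. Set $c\,=\,\codim Z$; I claim that if $\mc{F}$ is locally free then $\mc{H}^q_Z(X,\mc{F}) = 0$ for $q\neq c$. Since (as recalled before Lemma \ref{ph14}) $\mc{H}^q_Z(X,\mc{F})$ is the sheafification of $U\mapsto H^q_{Z\cap U}(U,\mc{F}|_U)$, the claim is local on $X$; so I may assume $X = \Spec A$ is affine, $\mc{F}$ is free, hence (as both sides are additive in $\mc{F}$) that $\mc{F} = \mo_X$, and, because $Z$ is a local complete intersection, that $Z = V(I)$ with $I = (f_1,\dots,f_c)$ generated by a regular sequence. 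Then $H^q_I(A)$ is computed by the extended \v{C}ech (stable Koszul) complex $A\to\bigoplus_i A_{f_i}\to\bigoplus_{i<j}A_{f_if_j}\to\cdots\to A_{f_1\cdots f_c}$, and for a regular sequence this complex is exact except at the last spot; I would cite \cite{grh} for this computation, or argue by induction on $c$, the case $c=1$ being $H^0_{(f)}(A)=0$ and $H^1_{(f)}(A) = A_f/A$ for $f$ a nonzerodivisor, with the inductive step coming from the Mayer–Vietoris sequence for local cohomology. Consequently $\mc{H}^q_Z(X,\mc{F}) = 0$ for $q\neq c$, since locally $\mc{F}$ is a finite direct sum of copies of $\mo_X$.

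With only the row $q = c$ surviving on the $E_2$ page, every differential $d_r$ with $r\geq 2$ has source or target zero, so $E_2^{p,c} = E_\infty^{p,c}$, and the abutment filtration on $H^n_Z(X,\mc{F})$ is concentrated in the single graded piece $p = n-c$. Hence $H^n_Z(X,\mc{F})\cong H^{n-c}(X,\mc{H}^c_Z(X,\mc{F}))$, which is exactly the stated isomorphism with $q = c$ and $p = n-c\geq 0$. The main obstacle is the middle step: the Grothendieck spectral sequence and its degeneration are formal homological algebra, but the vanishing of the local cohomology sheaves off the codimension is where the hypothesis that $Z$ is a local complete intersection genuinely does its work, via the classical fact that the \v{C}ech/Koszul complex of a regular sequence has cohomology in top degree only.
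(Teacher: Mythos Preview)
Your proof is correct and follows the same overall strategy as the paper: establish the spectral sequence, show that $\mc{H}^k_Z(X,\mc{F})$ vanishes for $k \neq \codim Z$, and conclude that the spectral sequence degenerates. For the first step the paper simply cites \cite[Proposition~1.4]{grh}, whereas you spell out the Grothendieck spectral sequence for the composite $\Gamma_Z = \Gamma(X,-)\circ\underline{\Gamma}_Z$; your treatment is more self-contained but arrives at the same place.

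The only substantive difference is in how the vanishing $\mc{H}^k_Z(X,\mc{F})=0$ for $k\neq q$ is proved. You handle it in one stroke via the stable Koszul (extended \v{C}ech) complex of a regular sequence. The paper instead splits the argument in two: vanishing for $k<q$ is obtained from the identification $\mathrm{depth}_{\I_Z(U_i)}(\mc{F}(U_i))=q$ together with the depth criterion for local cohomology \cite[Ex.~III.3.3, 3.4]{R1}; vanishing for $k>q$ is obtained by noting that $U_i\setminus Z$ is covered by the $q$ affine opens $D(f_j^{(i)})$, so $H^k(U_i\setminus Z,\mc{F})=0$ for $k\ge q$ by \cite[Ex.~III.4.8]{R1}, and then using $H^{k+1}_{Z\cap U_i}(U_i,\mc{F})\cong H^k(U_i\setminus Z,\mc{F})$. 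Both routes are standard; yours is more compact, while the paper's stays closer to citable exercises in \cite{R1} and \cite{grh}.
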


\begin{proof}
The first statement is proven in \cite[Proposition $1.4$]{grh}.

Assume that $\mc{F}$ is locally free. We will show that $\mc{H}^k_Z(X,\,\mc{F})
\,=\,0$ for $k \,\not=\, q$.

Since $Z$ is a local complete intersection subscheme in $X$, there
exists an affine open covering $\{U_i\}$ of $X$ such that for each $i$ satisfying 
$Z \cap U_i \,\not=\, \emptyset$, the $\mo_X(U_i)$--module $\I_Z(U_i)$ is generated by a
$\mo_X(U_i)$--regular sequence of length $q$. In the terminology of
\cite[Ex. III.$3.4$]{R1}, this is equivalent to the assertion that
$\mbox{depth}_{\I_Z(U_i)}(\mo_X(U_i))\,=\,q$. By taking a refinement of the covering 
$\{U_i\}$ if necessary, we can also assume that $$\mc{F}|_{U_i}\,\cong\, 
\bigoplus_{i=1}^{\mr{rk}(\mc{F})}\mo_{U_i}\, .$$
This means that for all $i$ satisfying $Z \cap U_i \,\not=\, \emptyset$, we have 
$\mbox{depth}_{\I_Z(U_i)}(\mc{F}(U_i))\,=\,q$. Using \cite[Ex. III.$3.3 \mbox{ and } 
3.4$]{R1}, it follows that $H^k_{Z \cap U_i}(U_i,\,\mc{F}|_{U_i})\,=\,0$ for all $k\,<\,q$.

Now, $H^k(U_i\backslash Z,\,\mc{F})\,\cong\, H^{k+1}_{Z\cap U_i}(U_i,\,\mc{F}|_{U_i})$
for all $k\,\ge\, 1$ (see \cite[Proposition $2.2$]{grh}). By construction, we have 
$\I_Z(U_i)\,=\,(f_1^{(i)},\cdots ,f_q^{(i)})$ if $Z \cap U_i \,\not=\, \emptyset$.
Hence, any such compelment $U_i\setminus Z$ can be covered by $q$ open affine sets, 
$V_j^{(i)}\,:=\,D(f_j^{(i)})$ for $j=1,\cdots ,q$. Then, \cite[III. Ex. $4.8$]{R1}
implies that $H^k(U_i\backslash Z,\,\mc{F})\,=\,0$ for $k \,\ge\, q$, and hence
$H^{k}_{Z \cap U_i}(U_i,\,\mc{F}|_{U_i})\,=\,0$ for all $k \,\ge\, q+1$.

As $\mc{H}^k_Z(X,\,\mc{F})$ is supported on $Z$, this means that 
$\mc{H}^k_Z(X,\,\mc{F})\,=\,0$ for $k \,\not=\, q$. Since 
$$E_2^{p,q}\,=\,H^p(X,\,\mc{H}_Z^q(X,\,\mc{F}))\,\Rightarrow\, H^{p+q}_Z(X,\,\mc{F})\, ,$$
we conclude that $H^{p+q}_Z(X,\,\mc{F}) \,\cong\, H^p(X,\,\mc{H}^q_Z(\mc{F}))$.
This completes the proof.
\end{proof}

\section{Relative Brill-Noether type locus}

Let $X$ be a smooth projective surface in $\p3$, and let $\mc{L}$ be an 
invertible sheaf on $X$ with $h^0(\mc{L})\,>\,1$. Assume that there is a
reduced divisor on $X$ lying in the complete linear system $|\mc{L}|$. Let
$\{U_i\}_{i \in I}$ be a Zariski open affine covering of $X$ such that $D \cap U_i$ 
is defined by a single equation, say $f_i\,=\,0$ with $f_i \,\in\, \Gamma(U_i,\,\mo_X)$.
Denote by $V_i$ the open affine set $U_i\backslash \{f_i=0\}$.

To describe the cohomology class of $D$ in $H^2(X,\,\mb{Z})$, using
Lemma \ref{ph14} we have the exact sequences
\begin{equation}\label{phd15}
\cdots \,\longrightarrow\, \Gamma(U_i,\,\Omega^1_X)\,\stackrel{\delta'_i}{\longrightarrow}
\, \Gamma(V_i,\,\Omega^1_X) \,\stackrel{\delta_i}{\longrightarrow}\,
\Gamma(U_i,\, \mc{H}^1_D(\Omega^1_X)) \,\longrightarrow\, \cdots\, .
\end{equation} 
Notice that the sections $\delta_i(df_i/f_i) \,\in\, \Gamma(U_i,\,\mc{H}^1_D(\Omega_X^1))$ agree
on the intersections $U_{ij}\,:=\, U_i\cap U_j$, i.e.,
$$
\delta_i(df_i/f_i)|_{U_{ij}}\,=\,\delta_j(df_j/f_j)|_{U_{ij}}\, .
$$
Indeed, $f_i|_{U_{ij}}\,=\,\lambda_{ij} f_j|_{U_{ij}}$ for
some $\lambda_{ij} \,\in\, \Gamma(U_{ij},\,\mo_{U_{ij}}^\times)$. Then,
\[\left. \frac{df_i}{f_i}\right|_{U_{ij}}\,=\,\left. \frac{d\lambda_{ij}}{\lambda_{ij}}\right|_{U_{ij}}+\left. \frac{df_j}{f_j}\right|_{U_{ij}}.\]
As $\lambda_{ij}$ is invertible, $d\lambda_{ij}/\lambda_{ij}
\,\in\, \Gamma(U_{ij},\Omega^1_X)$. Using the short exact sequence (\ref{phd15}) this
implies that $\delta_j \circ \delta_j'|_{U_{ij}}(d\lambda_{ij}/\lambda_{ij})\,=\,0$.
Hence, $\delta_i(df_i/f_i)|_{U_{ij}}\,=\,\delta_j(df_j/f_j)|_{U_{ij}}$.

Therefore, the local sections $\delta_i(df_i/f_i)\,\in\, \Gamma(U_i,\,
\mc{H}^1_D(\mo_X))$ glue compatibly to define a global section $\{D\}
\,\in\, H^0(X,\, \mc{H}^1_D(\Omega_X^1))$. 

Using the short exact sequence in \eqref{phd15} and arguing as above, it is easy to 
see that the above class $\{D\}$ does not depend on the choice of the 
representatives $f_i$.

\begin{rem}\label{phd17}
Proposition \ref{ph15} implies that 
$H^0(\mc{H}^1_D(\Omega_X^1))\,\cong \, H^1_D(\Omega^1_X)$, while Lemma \ref{ph14} implies 
that we have a homomorphism
$H^1_D(\Omega_X^1) \,\longrightarrow\, H^1(\Omega_X^1)$. The Chern class
$$c_1(\mc{L}) \,\in\, H^1(X,\,\Omega^1_X)$$ is the image of $\{D\}
\,\in\, H^1_D(\Omega_X^1)$ under this homomorphism (see \cite{fgag}).
\end{rem}

We now describe the cup-product $\bigcup c_1(\mc{L})\,:\,H^1(\T_X)
\,\longrightarrow\, H^2(\mo_X)$. Define $U\,:=\,X\backslash D$, and
let $j\,:\,U \,\hookrightarrow\, X$ be the open immersion.
The proof of Proposition \ref{ph15} shows that $\mc{H}^0_D(\mo_X)\,=\,0$. By Lemma
\ref{ph14}, we have a short exact sequence
\begin{equation}\label{br5}
0 \,\longrightarrow\, \mo_X \,\longrightarrow\, j_*\mo_U \,
\stackrel{\delta}{\longrightarrow}\, \mc{H}^1_D(\mo_X) \,\longrightarrow\, 0\, .
\end{equation}

Let \[\lrcorner \{D\}\,:\,\mc{T}_X \,\longrightarrow\, \mc{H}^1_D(\mo_X)\]
be the homomorphism which on each $U_i$ is defined as 
 \[\phi \,\longmapsto\, \delta\left(\frac{\phi(df_i)}{f_i}\right)\, ,\]
where $\delta$ is the projection in \eqref{br5}.
 Proposition \ref{ph15} implies that $H^1(\mc{H}^1_D(\mo_X)) \,\cong\, H^2_D(\mo_X)$. 
 This induces a homomorphism
\[\lrcorner\{D\}\,:\,H^1(\T_X) \,\longrightarrow\,
 H^1(X,\,\mc{H}^1_D(\mo_X)) \,\cong\, H^2_D(\mo_X)\, ;\]
with a slight abuse of notation, this will also be denoted by $\lrcorner\{D\}$. Using Remark
\ref{phd17} one can check that the cup-product map $\cup c_1(\mc{L})$ is the composition
\begin{equation}\label{phd10}
\cup c_1(\mc{L})\,:\,H^1(\T_X) \,\stackrel{\lrcorner\{D\}}{\longrightarrow}\, H^2_D(\mo_X)
\,\longrightarrow\, H^2(\mo_X)\, .
\end{equation}

Let \[\lrcorner \{D\}'\,:\,\N_{D|X} \,\longrightarrow\, \mc{H}^1_D(\mo_X)\]
be the homomorphism which on each $U_i$ is defined as
\[\lrcorner\{D\}'|_{U_i}(\phi) \,=\, \delta\left(\frac{\widetilde{\phi(f_i)}}{f_i}
\right)\, ,\]
where $\widetilde{g}$ for any $g \,\in\, \mo_D(U_i \cap D)$ is its preimage under the
natural surjective homomorphism $\mo_X(U_i) \,\longrightarrow\, \mo_D(U_i \cap D)$.
It follows from the short exact sequence in \eqref{br5} that the map
$\lrcorner\{D\}'$ does not depend on the choice of the lift of $\phi(f_i)$.

\begin{lem}\label{br6}
The above homomorphism $\lrcorner \{D\}'\,:\,\N_{D|X}\,\longrightarrow\,
\mc{H}_D^1(\mo_X)$ is injective.
\end{lem}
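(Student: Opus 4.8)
The plan is to prove injectivity of $\lrcorner\{D\}'$ by working locally on the affine cover $\{U_i\}$ and using the fact that the normal sheaf $\N_{D|X}$ is supported on $D$, so the kernel sheaf is determined by its stalks at points of $D$. First I would recall that $\N_{D|X}\,\cong\,\mc{H}om_{\mo_D}(\I_D/\I_D^2,\,\mo_D)$, and since $D$ is a reduced (hence, on a smooth surface, locally principal) divisor, on each $U_i$ the ideal $\I_D(U_i)$ is generated by a single nonzerodivisor $f_i$, so $\N_{D|X}|_{U_i}$ is the free $\mo_D(U_i\cap D)$--module of rank one generated by the dual of $f_i$. Concretely a local section of $\N_{D|X}$ over $U_i$ is given by $\phi$ sending $f_i$ to some $g\,\in\,\mo_D(U_i\cap D)$, and $\lrcorner\{D\}'|_{U_i}(\phi)\,=\,\delta(\widetilde g/f_i)$, where $\widetilde g$ is any lift of $g$ to $\mo_X(U_i)$.

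Next I would examine the short exact sequence (\ref{br5}) restricted to $U_i$, namely $0\to\mo_{U_i}\to (j_*\mo_U)|_{U_i}\xrightarrow{\delta}\mc{H}^1_D(\mo_X)|_{U_i}\to 0$; identifying $(j_*\mo_U)|_{U_i}$ with the localization $\mo_X(U_i)[1/f_i]$ at the level of sections (more precisely with $\mo_{V_i}$ where $V_i\,=\,U_i\setminus\{f_i\,=\,0\}$), the statement $\delta(\widetilde g/f_i)\,=\,0$ means exactly that $\widetilde g/f_i$ lies in the image of $\mo_{U_i}$, i.e. $\widetilde g/f_i\,\in\,\mo_X(U_i)$, so $f_i$ divides $\widetilde g$ in $\mo_X(U_i)$. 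But then $g\,=\,\widetilde g\bmod f_i$ is zero in $\mo_D(U_i\cap D)\,=\,\mo_X(U_i)/(f_i)$, which forces $\phi\,=\,0$ as a section of $\N_{D|X}$ over $U_i$. Since this argument applies on every member of the cover and the $U_i$ with $U_i\cap D\,\neq\,\emptyset$ cover the support of $\N_{D|X}$, the kernel sheaf of $\lrcorner\{D\}'$ vanishes, giving injectivity.

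I expect the main subtlety to be the careful bookkeeping at the sheaf level: one must check that the local recipe $\phi\,\mapsto\,\delta(\widetilde g/f_i)$ is well defined independently of the lift $\widetilde g$ (already noted in the text, following from (\ref{br5})) and that it is compatible with restriction to $U_{ij}$, so that it really defines a sheaf homomorphism whose kernel can be computed stalkwise. The transition on $U_{ij}$ uses $f_i\,=\,\lambda_{ij}f_j$ with $\lambda_{ij}$ a unit, under which the generator of $\N_{D|X}$ transforms by $\lambda_{ij}$ and $\widetilde g/f_i$ transforms correspondingly, so the two local descriptions of $\delta(\cdot)$ agree on overlaps because $\{D\}$ itself glues (as established above in the construction of $\{D\}$). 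Once the sheaf map is in hand, injectivity is the purely local divisibility statement above, and the hypothesis that $X$ is a smooth surface (so $D$ is an effective Cartier divisor and each $f_i$ is a nonzerodivisor) is exactly what makes "$f_i$ divides $\widetilde g$ implies $g\,=\,0$ in $\mo_D$'' valid.
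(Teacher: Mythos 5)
Your argument is correct and is essentially the paper's own proof: both reduce to the local statement, via the short exact sequence \eqref{br5}, that $\delta(\widetilde{\phi(f_i)}/f_i)\,=\,0$ forces $\widetilde{\phi(f_i)}/f_i\,\in\,\mo_X(U_i)$, hence $\phi(f_i)\,=\,0$ in $\mo_D$ and so $\phi\,=\,0$, since $\phi$ is determined by its value on $f_i$. The extra remarks on well-definedness of the lift and gluing on overlaps are fine but already covered in the surrounding text.
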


\begin{proof}
{}From the short exact sequence in \eqref{br5} it follows that
$\delta\left(\frac{\widetilde{\phi(f_i)}}{f_i}\right)\,=\,0$ on $U_i$ if and only if 
$\widetilde{\phi(f_i)}/f_i \,\in\, \mo_X(U_i)$, which is possible if and only if 
$\widetilde{\phi(f_i)} \,\in \,\mo_X(-D)(U_i)$. But this means that $\phi(f_i)\,=\,0$;
hence $\phi\,=\,0$ because it is determined by its evaluation on $f_i$. Consequently, 
$\lrcorner \{D\}'$ is injective.
\end{proof}

 \begin{cor}
There is a short exact sequence
\begin{equation}\label{br8}
0 \,\longrightarrow\, \N_{D|X}\,\stackrel{\lrcorner \{D\}'}{\longrightarrow}\,
\mc{H}^1_D(\mo_X) \,\stackrel{\psi}{\longrightarrow}\,
\mc{H}^1_D(\mo_X(D)) \,\longrightarrow\, 0\, ,
\end{equation}
where $\psi$ is the homomorphism arising from the natural homomorphism $\mo_X
\,\longrightarrow\, \mo_X(D)$.
 \end{cor}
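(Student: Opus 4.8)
The plan is to exhibit the short exact sequence in \eqref{br8} as coming from a diagram chase on three short exact sequences: the defining sequence \eqref{br5} for $\mc{H}^1_D(\mo_X)$, its twisted analogue for $\mo_X(D)$, and the structure sequence $0\to\mo_X(-D)\to\mo_X\to\mo_D\to 0$. Concretely, I would first twist \eqref{br5} by $\mo_X(D)$ to obtain
\[
0 \,\longrightarrow\, \mo_X(D) \,\longrightarrow\, j_*\mo_U(D) \,\stackrel{\delta_D}{\longrightarrow}\, \mc{H}^1_D(\mo_X(D)) \,\longrightarrow\, 0\, ,
\]
using that $\mc{H}^0_D(\mo_X(D))\,=\,0$ (which follows from the proof of Proposition \ref{ph15} exactly as for $\mo_X$, since $D$ has codimension one and $\mo_X(D)$ is locally free) and that $\mc{H}^i_D(\mo_X(D))\,\cong\, R^{i-1}j_*\mo_U(D)$ for $i\,>\,0$ by Lemma \ref{ph14}, together with $j_*\mo_U\,=\,j_*\mo_U(D)$ since $\mo_X(D)|_U\,\cong\,\mo_U$. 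Then I would compare \eqref{br5} with this twisted sequence via the vertical natural maps induced by $\mo_X\hookrightarrow\mo_X(D)$; the snake lemma applied to this ladder gives a six-term exact sequence whose connecting pieces identify $\ker(\psi)$ with the cokernel of $\mo_X\to\mo_X(D)$ on the left-hand column modulo the image from $j_*\mo_U\to j_*\mo_U(D)$.

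The cleaner route, which I would actually write out, is to check exactness of \eqref{br8} locally on each affine $U_i$ from the explicit formulas, and then note the maps are globally defined. On $U_i$, injectivity of $\lrcorner\{D\}'$ is Lemma \ref{br6}. For surjectivity of $\psi$: given a section of $\mc{H}^1_D(\mo_X(D))$ over $U_i$, lift it via $\delta_D$ to a section of $j_*\mo_U(D)$, i.e.\ an element $g/f_i^k$ with $g\in\mo_X(U_i)$; multiplying by the local equation one reduces $k$ and finds a preimage in $\mc{H}^1_D(\mo_X)$ under $\psi$, since $\psi$ is induced by the same inclusion $\mo_X\hookrightarrow\mo_X(D)$ at the level of the $j_*$-sequences, hence compatible with $\delta$ and $\delta_D$. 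For exactness in the middle: a section of $\mc{H}^1_D(\mo_X)$, represented by $\widetilde{h}/f_i$ with $h\in\mo_X(U_i)$, dies in $\mc{H}^1_D(\mo_X(D))$ exactly when $\widetilde h/f_i$, viewed in $j_*\mo_U(D)$, lies in the image of $\mo_X(D)(U_i)$, i.e.\ when $\widetilde h/f_i\,=\,g/f_i$ for some $g$ regular, i.e.\ when $\widetilde h$ differs from a regular section by an element of $f_i\mo_X(U_i)$; modulo $f_i\mo_X(U_i)$ the class $\widetilde h$ is well-defined in $\mo_D(U_i\cap D)$, i.e.\ it is $\phi(f_i)$ for a unique local section $\phi$ of $\N_{D|X}$, and one reads off $\lrcorner\{D\}'|_{U_i}(\phi)$ equals our section. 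So $\ker\psi\,=\,\Ima\lrcorner\{D\}'$.

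Finally I would remark that all three maps $\lrcorner\{D\}'$, $\psi$, and the identifications are independent of the choice of local equations $f_i$ (for $\lrcorner\{D\}'$ this is already noted before Lemma \ref{br6}; for $\psi$ it is manifest), so the locally checked exact sequence \eqref{br8} is a genuine short exact sequence of sheaves on $X$. The main obstacle I anticipate is purely bookkeeping: making sure the vanishing $\mc{H}^0_D(\mo_X(D))=0$ and the identification $\mc{H}^1_D(\mo_X(D))\cong R^0 j_*\mo_U(D)/\mo_X(D)$ are invoked correctly so that the twisted analogue of \eqref{br5} is legitimate, and then tracking the lifts $\widetilde{(\,\cdot\,)}$ carefully enough that the middle-exactness computation genuinely produces a section of $\N_{D|X}$ rather than of some larger sheaf. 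Everything else is a direct unwinding of the definitions of $\delta$, $\psi$, and $\lrcorner\{D\}'$.
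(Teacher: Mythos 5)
Your proposal is correct and is essentially the paper's own argument: the paper proves the corollary exactly by comparing \eqref{br5} with its twist by $\mo_X(D)$ through the identification $j_*\mo_U \cong j_*(\mo_X(D)|_U)$, applying the snake lemma to get surjectivity of $\psi$ and $\ker\psi \cong \N_{D|X}$, and then identifying this kernel with the image of $\lrcorner\{D\}'$ via the simple-pole representatives $g_i/f_i$, which is the same local computation you describe. The only point to tighten in your ``cleaner'' standalone verification is middle exactness: a general local section of $\mc{H}^1_D(\mo_X)$ over the affine $U_i$ is $\delta(g/f_i^k)$ with arbitrary $k$ (the lift exists since $H^1(U_i,\mo_X)=0$), and it is precisely the vanishing under $\psi$ that forces a representative in $\Gamma(U_i,\mo_X(D))$, i.e.\ of the form $g/f_i$, so you should not start from a representative $\widetilde{h}/f_i$ of simple pole, which presupposes the nontrivial inclusion $\ker\psi \subset \Ima\,\lrcorner\{D\}'$.
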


 \begin{proof}
The injectivity of $\lrcorner \{D\}'$ is proved in Lemma \ref{br6}. Clearly,
$j_*\mo_U \,\cong\, j_*(\mo_X(D)|_U)$. So, we have the following diagram
\[\begin{diagram}
0&\rTo&\mo_X&\rTo&j_*\mo_U&\rTo^{\delta}&\mc{H}^1_D(\mo_X)&\rTo&0\\
& &\dInto&\circlearrowleft&\dTo^{\mr{id}}_{\wr}&\circlearrowleft&\dTo^{\psi}\\
0&\rTo&\mo_X(D)&\rTo&j_*\mo_U&\rTo^{\delta'}&\mc{H}^1_D(\mo_X(D))&\rTo&0 
\end{diagram}\]
where the horizontal short exact sequences are obtained using Lemma \ref{ph14} and
the above identification $j_*\mo_U \,\cong\, j_*(\mo_X(D)|_U)$.
Applying Snake lemma to the above diagram, the homomorphism
$\psi$ is surjective and $\text{kernel}(\psi)$ is isomorphic to the cokernel of 
the homomorphism $\mo_X \,\longrightarrow\, \mo_X(D)$, which is $\N_{D|X}$
by the Poincar\'e adjunction formula. It just
remains to prove that the induced homomorphism from $\N_{D|X}$ to $\mc{H}^1_D(\mo_X)$ 
 is $\lrcorner \{D\}'$ or, equivalently, exactness in the middle of (\ref{br8}).
 
 Using the above diagram, we have $$\ker \psi\,=\,\delta(\ker \delta')
\,=\,\delta(\Ima (\mo_X(D) \to j_*\mo_U))\, .$$ 
Consider the homomorphism $\mo_X(D) \,\longrightarrow\, \N_{D|X}$ defined on open
subsets $U_i$ by 
 $g_i/f_i \,\longmapsto \,\phi$, where $g_i\,\in\, \mo_X(U_i)$ while $\phi$ is defined
as $f_i\,\longmapsto\, g_i \mod \I_D(U_i)$. For such $\phi$, the definition of
$\lrcorner \{D\}'$ states that 
 $\lrcorner \{D\}'|_{U_i}(\phi)\,=\,\delta(g_i/f_i)$. So, for each $g_i/f_i\,\in\,
\mo_X(D)$ we can construct $\phi$ as above such that 
 $\lrcorner \{D\}'|_{U_i}(\phi)\,=\,\delta(g_i/f_i)$. Observe that the induced map from
$\mo_X(D)$ to $\N_{D|X}$ is surjective. Hence, $\ker \psi\,=\,\Ima \lrcorner \{D\}'$.
This completes proof.
\end{proof}

The following lemma will be used in the proof of Proposition \ref{br3} below.
 
\begin{lem}\label{sem2}
Let $i\,:\,D \,\longrightarrow\, X$ be the closed immersion. Then
\[\mc{E}xt^m_X(\mc{H}_D^1(\mo_X(D)),\,i_*\N_{D|X})\,=\,0 \ \ \mbox{ for } m\,=\,0,1\, .\]
 In particular, $\mr{Ext}^1_X(\mc{H}^1_D(\mo_X(D)),\,i_*\N_{D|X})\,=\,0$.
\end{lem}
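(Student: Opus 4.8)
The plan is to compute the sheaf $\mc{E}xt^m_X$ locally and exploit the fact that $\mc{H}^1_D(\mo_X(D))$ is supported on $D$, hence has a natural structure as a sheaf on $D$. First I would note that by Lemma \ref{ph14}, $\mc{H}^1_D(\mo_X(D)) \cong R^0 j_*(\mo_X(D)|_U) / \mo_X(D) \cong j_*\mo_U / \mo_X(D)$, and more to the point, one can identify $\mc{H}^1_D(\mo_X(D))$ with $i_*$ of a sheaf on $D$. Concretely, choosing local equations $f_i$ for $D$ on an affine cover $\{U_i\}$, the local cohomology $\mc{H}^1_D(\mo_X(D))|_{U_i}$ is generated over $\mo_{U_i}$ by the classes of $1/f_i^{\,n}$, and since $f_i$ annihilates the class of $1/f_i$ while $1/f_i^{\,n+1}$ is not in the image, the sheaf admits a filtration whose graded pieces are the pushforwards $i_*(\N_{D|X}^{\otimes n})$ for $n \ge 1$ (this is the classical local-duality picture: $\mc{H}^1_D(\mo_X(D)) \cong i_* \bigl(\varinjlim_n \N_{D|X}^{\otimes n}\bigr)$, or at least has such a filtration). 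The key structural point is simply that $\mc{H}^1_D(\mo_X(D)) \cong i_*\mc{G}$ for some quasi-coherent $\mo_D$-module $\mc{G}$.

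Given that identification, the plan is to apply the local-to-global adjunction. Since $i\,:\,D \hookrightarrow X$ is a closed immersion of a divisor (a local complete intersection of codimension one), for any $\mo_D$-module $\mc{G}$ and any $\mo_X$-module $\mc{F}$ one has $\mc{E}xt^m_X(i_*\mc{G},\, \mc{F}) \cong i_*\mc{E}xt^m_D(\mc{G},\, i^!\mc{F})$, and more concretely $\mc{E}xt^m_X(i_*\mc{G}, i_*\mc{H}) = 0$ for $m = 0$ because $i_*\mc{G}$ is a torsion sheaf (killed by $\I_D$) while $i_*\mc{H}$ is also killed by $\I_D$ — wait, that is not immediate; instead I would argue via the Koszul resolution. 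Locally on $U_i$, the ideal sheaf $\I_D|_{U_i}$ is generated by the single nonzerodivisor $f_i$, so we have the locally free resolution $0 \to \mo_{U_i} \xrightarrow{\,f_i\,} \mo_{U_i} \to i_*\mo_D|_{U_i} \to 0$. Tensoring and computing, $\mc{E}xt^m_X(i_*\mo_D,\, \mc{F})$ is the cohomology of $\mc{F} \xrightarrow{f_i} \mc{F}$ in degrees $0,1$; when $\mc{F} = i_*\N_{D|X}$, multiplication by $f_i$ is zero (since $f_i \in \I_D$), so $\mc{E}xt^0_X(i_*\mo_D, i_*\N_{D|X}) = \mc{H}om(i_*\mo_D, i_*\N_{D|X})$ and $\mc{E}xt^1_X(i_*\mo_D, i_*\N_{D|X}) = i_*\N_{D|X}$, neither vanishing. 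So I cannot directly reduce to $i_*\mo_D$; I must keep track of the direct limit structure.

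The correct route, therefore, is: write $\mc{H}^1_D(\mo_X(D)) \cong i_*\mc{G}$ with $\mc{G} = \varinjlim_n \N_{D|X}^{\otimes n}$ (direct limit over the multiplication-by-$f_i$ maps — note the twist by $\mo_X(D)$ shifts the indexing so that the limit starts at $n=0$ or $n=1$; I would pin this down carefully from the exact sequence \eqref{br8}, which already exhibits $\N_{D|X} \hookrightarrow \mc{H}^1_D(\mo_X) \to \mc{H}^1_D(\mo_X(D))$). Then, since $\mc{E}xt$ commutes with direct limits in the first variable (the indexing category is filtered and the terms are coherent), $\mc{E}xt^m_X(i_*\mc{G}, i_*\N_{D|X}) = \varprojlim_n \mc{E}xt^m_X(i_*\N_{D|X}^{\otimes n}, i_*\N_{D|X})$, and the transition maps in this inverse system are induced by multiplication by $f_i$, hence are \emph{zero} on the sheaf level for the relevant local pieces — this is exactly what forces the vanishing. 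More precisely, for each $n$, the inclusion $\N_{D|X}^{\otimes n} \hookrightarrow \mc{G}$ factors through $\N_{D|X}^{\otimes(n+1)}$ via multiplication by the local parameter, and this multiplication map induces the zero map on $\mc{E}xt^m_X(-, i_*\N_{D|X})$ since $i_*\N_{D|X}$ is $\I_D$-torsion; therefore every element of the inverse limit is zero, giving $\mc{E}xt^m_X(\mc{H}^1_D(\mo_X(D)), i_*\N_{D|X}) = 0$ for $m = 0, 1$. Finally, the statement about global $\mr{Ext}$: I would invoke the local-to-global spectral sequence $H^p(X, \mc{E}xt^q_X(\mc{H}^1_D(\mo_X(D)), i_*\N_{D|X})) \Rightarrow \mr{Ext}^{p+q}_X(\mc{H}^1_D(\mo_X(D)), i_*\N_{D|X})$; since the $E_2$ terms with $q = 0, 1$ vanish, the $n=1$ term $\mr{Ext}^1_X$ vanishes (the only contributions would be $(p,q) = (1,0)$ and $(0,1)$, both zero).

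The main obstacle I anticipate is getting the direct-limit identification of $\mc{H}^1_D(\mo_X(D))$ exactly right — in particular the bookkeeping of which twist of $\N_{D|X}$ appears and verifying that the transition maps really are induced by multiplication by the local equation (so that they kill $\I_D$-torsion targets). Once that is nailed down, the vanishing is essentially formal: it is the statement that a filtered colimit of sheaves, each admitting $\I_D$ as an annihilator on the relevant quotients and with transition maps given by an element of $\I_D$, has no maps or extensions into an $\I_D$-torsion sheaf. An alternative that sidesteps the colimit, if one prefers, is to use $\mc{E}xt^m_X(i_*\mc{G}, i_*\N_{D|X}) \cong i_*\mc{E}xt^m_D(\mc{G}, \mc{E}xt^1_X(i_*\mo_D, \mo_X(D)) \otimes \cdots)$ via the change-of-rings spectral sequence for $i^!$, but the Koszul computation above combined with the colimit argument is cleaner and avoids invoking the full machinery of $i^!$.
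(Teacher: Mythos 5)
Your proposal rests on two structural claims, and both break down, the second fatally. First, $\mc{H}^1_D(\mo_X(D))$ is \emph{not} of the form $i_*\mc{G}$ for an $\mo_D$-module $\mc{G}$: it is supported on $D$ but is not annihilated by $\I_D$; on the contrary, stalkwise it is isomorphic to $R_f/R$ ($R$ the local ring, $f$ a local equation of $D$), on which $f$ acts \emph{surjectively}. The correct colimit presentation is $\mc{H}^1_D(\mo_X(D))\,\cong\,\varinjlim_n \mo_X((n+1)D)/\mo_X(D)$, whose terms live on the thickenings $nD$, not on $D$; this bookkeeping error could be repaired. Second, and decisively, your key mechanism --- ``the transition maps induce zero on $\mc{E}xt^m(-,i_*\N_{D|X})$ because the target is $\I_D$-torsion'' --- is true only for $m=0$. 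Locally, with $A_n=R/f^nR$ and transition map $A_n\to A_{n+1}$, $1\mapsto f$, a lift to the resolutions $0\to R\xrightarrow{f^n}R\to A_n\to 0$ is multiplication by $f$ in degree $0$ but the \emph{identity} in degree $1$; hence for $N$ with $fN=0$ the induced map $\mr{Ext}^1(A_{n+1},N)\to \mr{Ext}^1(A_n,N)$ is an isomorphism ($N\to N$, the identity), not zero. Your heuristic applies to precomposition on $\mr{Hom}$, but the transition maps are not endomorphisms, and on $\mr{Ext}^1$ they do not die; the inverse system of $\mr{Ext}^1$'s has limit $N\neq 0$ (the neglected $\varprojlim^1$ in exchanging $\mc{E}xt$ with the colimit is a secondary issue). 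Your own Koszul computation showing $\mc{E}xt^1_X(i_*\mo_D,i_*\N_{D|X})\neq 0$ was the warning sign.

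Moreover, this gap cannot be closed by refining the limit argument, because the $m=1$ (and hence the $\mr{Ext}^1$) vanishing collides with the sequence \eqref{br8} itself: locally that sequence reads $0\to \tfrac{1}{f}R/R\to R_f/R\to R_f/\tfrac{1}{f}R\to 0$, where the middle term is $f$-divisible stalkwise while the subsheaf $i_*\N_{D|X}$ is killed by $f$, so any retraction vanishes stalkwise and \eqref{br8} admits no splitting --- i.e.\ it already represents a nonzero class in $\mr{Ext}^1_X(\mc{H}^1_D(\mo_X(D)),i_*\N_{D|X})$. What your computation detects is exactly this obstruction. For comparison, the paper argues differently: it proves the divisibility statement in the form $\mc{H}^1_D(\mo_X(D))\otimes_{\mo_X}\mo_D=0$ and then invokes an adjunction $\mc{E}xt^m_X(\mc{F},i_*\N_{D|X})\cong \mc{E}xt^m_D(\mc{F}\otimes\mo_D,\N_{D|X})$ for $m=0,1$; this is valid for $m=0$ (giving the $\mc{H}om$-vanishing, which your $m=0$ argument also yields), but for $m=1$ it would require the vanishing of $\mc{T}or_1^{\mo_X}(\mc{H}^1_D(\mo_X(D)),\mo_D)$, which is instead a nonzero sheaf supported on $D$ --- the same nonvanishing your local computation produces. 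So your proposal does not establish the lemma as stated, and the discrepancy you have uncovered at $m=1$ also affects the way Lemma \ref{sem2} is used to split \eqref{br8} in the proof of Proposition \ref{br3}; that is where any repair has to start.
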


\begin{proof}
By adjunction,
\[\mc{E}xt^m_X(\mc{H}^1_D(\mo_X(D)),\,i_*\N_{D|X})
\,\cong\, \mc{E}xt^m_D(\mc{H}^1_D(\mo_X(D)) \otimes_{\mo_X} \mo_D,\,\N_{D|X})\, .\]

We claim that $\mc{H}^1_D(\mo_X(D)) \otimes_{\mo_X} \mo_D \,=\,0$.

To prove the claim, using
Lemma \ref{ph14} we have the short exact sequence
 \[0 \,\longrightarrow\, \mo_X(D) \,\longrightarrow\, j_*(\mo_X(D)|_{U})\,
\stackrel{\delta}{\longrightarrow}\, \mc{H}^1_D(\mo_X(D)) \,\longrightarrow\, 0\, .\]
 So, $\mc{H}^1_D(\mo_X(D))$ is supported on $D$. For any $x \,\in\, D$,
$$(j_*\mo_X(D)|_{U})_x \,\cong\, \mo_{X,x}[1/f_x]\, ,$$ where $f_x \,\in\, \mo_{X,x}$ is
the defining equation for $D$ at $x$.
 Any element of $\mc{H}^1_D(\mo_X(D))_x$ is of the form $\delta(g)\,=\,f_x\delta(g/f_x)$,
where $g \,\in\, j_*(\mo_X(D)|_U)_x$. So, $\delta(g) \otimes_{\mo_{X,x}} 1$ is zero in
$\mc{H}^1_D(\mo_X)_x \otimes_{\mo_{X,x}} \mo_{X,x}/(f_x)$, which implies that
$\mc{H}^1_D(\mo_X)_x \otimes_{\mo_{X,x}} \mo_{X,x}/\I_{D,x}\,=\,0$. This proves the claim.

Hence, $\mc{E}xt^m_D(\mc{H}^1_D(\mo_X(D)) \otimes_{\mo_X} \mo_D,\,\N_{D|X})\,=\,0$ for 
 $m\,=\,0,\,1$. Hence, \[\mc{E}xt^m_X(\mc{H}_D^1(\mo_X(D)),\, i_*\N_{D|X})
\,=\,0 \ \ \mbox{ for } m\,=\,0,\,1\, .\]
By Grothendieck Spectral sequence,
\[\mr{Ext}^1_X(\mc{H}^1_D(\mo_X(D)),\,i_*\N_{D|X})
\,\cong\,\bigoplus\limits_{i=0}^1 H^i(\mc{E}xt^{1-i}_X(\mc{H}^1_D(\mo_X(D)),\,
i_*\N_{D|X}))\, .\]
 Hence, $\mr{Ext}^1_X(\mc{H}^1_D(\mo_X(D)),\, i_*\N_{D|X})\,=\,0$, proving the lemma.
 \end{proof}

Given any $t \,\in\, H^1(\T_X)$, denote by $X_t$ the infinitesimal deformation of $X$
along $t$.

\begin{prop}\label{br3}
For any reduced $D \,\in\, |\mc{L}|$, the homomorphism $\cup c_1(\mc{L})$ factors
through $\lrcorner \{D\}$, meaning the following diagram is commutative:
\begin{equation}\label{phd16}
\begin{diagram}
H^1(\mc{T}_X)& \\
\dTo^{\cup c_1(\mc{L})}&\rdTo^{\lrcorner \{D\}} \\
H^2(\mo_X) &\lTo&H^2_D(\mo_X)
\end{diagram}
\end{equation}

Furthermore, given any $t \,\in\, \ker \cup c_1(\mc{L})$ and $X_t$ the corresponding
infinitesimal deformation of $X$, the divisor $D$ lifts to an effective Cartier divisor
in $X_t$ if and only if $t \lrcorner \{D\}\,=\,0$.
 \end{prop}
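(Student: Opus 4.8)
The commutativity of \eqref{phd16} is essentially built into the construction. Recall from \eqref{phd10} that $\cup c_1(\mc{L})$ is defined as the composition of $\lrcorner\{D\}$ with the natural map $H^2_D(\mo_X)\to H^2(\mo_X)$ induced by the short exact sequence \eqref{br5}; so the triangle commutes by definition, and the first sentence requires only that we unwind the identification $H^1(\mc{H}^1_D(\mo_X))\cong H^2_D(\mo_X)$ of Proposition \ref{ph15} and the fact that the local formula $\phi\mapsto\delta(\phi(df_i)/f_i)$ globalizes (using Remark \ref{phd17} to match $\{D\}$ with $c_1(\mc{L})$).

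For the second assertion the plan is to interpret ``$D$ lifts to an effective Cartier divisor on $X_t$'' as the vanishing of an obstruction class living in $H^2_D(\mo_X)$, and then to identify that obstruction with $t\lrcorner\{D\}$. First I would set up the deformation-theoretic picture: deformations of the pair $(X,D)$ over the dual numbers, with $X$ itself deformed to $X_t$, are governed by the cohomology of the complex computing the relative situation; the primary obstruction to lifting $D$ as a Cartier divisor, once $t\in\ker(\cup c_1(\mc{L}))$ guarantees that $c_1(\mc{L})$ stays Hodge (equivalently that $\mc{L}$ itself deforms), is a class $\mathrm{ob}(t)\in H^1(\N_{D|X})$ or more naturally in $H^2_D(\mo_X)$. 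The key input is the short exact sequence \eqref{br8}: applying the long exact cohomology sequence to
\[
0 \longrightarrow \N_{D|X}\stackrel{\lrcorner\{D\}'}{\longrightarrow}\mc{H}^1_D(\mo_X)\stackrel{\psi}{\longrightarrow}\mc{H}^1_D(\mo_X(D))\longrightarrow 0
\]
together with $H^1(\mc{H}^1_D(\mo_X))\cong H^2_D(\mo_X)$ identifies the relevant cohomology groups, and Lemma \ref{sem2} (the vanishing of $\mathrm{Ext}^1_X(\mc{H}^1_D(\mo_X(D)),i_*\N_{D|X})$) is what forces the obstruction to lifting $D$-as-a-divisor inside $X_t$ to coincide with the obstruction to lifting the section-pair, i.e. with $t\lrcorner\{D\}$ rather than with something strictly weaker.

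Concretely, I would argue as follows. Using the Kodaira–Spencer class $t$, represented by a \v{C}ech cocycle $\{\theta_{ij}\}$ for $\T_X$ on the covering $\{U_i\}$, the natural candidate lift of $D$ on the first-order deformation is $\{f_i + \varepsilon\, g_i\}$ for suitable $g_i\in\mo_X(U_i)$; the gluing conditions on overlaps $U_{ij}$ (using the transition automorphisms $1+\varepsilon\theta_{ij}$ of $X_t$) say precisely that $g_i/f_i - g_j/f_j$ must differ from $\theta_{ij}(df_j)/f_j$ type terms by something regular, and this cocycle condition, pushed into $\mc{H}^1_D(\mo_X)$ via $\delta$, is exactly $t\lrcorner\{D\}\in H^1(\mc{H}^1_D(\mo_X))\cong H^2_D(\mo_X)$. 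So a lift exists iff this class vanishes. I would then check, via the sequence \eqref{br8} and Lemma \ref{sem2}, that the potential discrepancy between ``the Cartier divisor $D$ deforms'' and ``the pair $(\mc{L},\text{its section})$ deforms compatibly'' is measured in a group that vanishes, so that the two notions agree and the obstruction is genuinely $t\lrcorner\{D\}$.

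The main obstacle I anticipate is the bookkeeping in this last comparison: one must be careful that $t\in\ker(\cup c_1(\mc{L}))$ — which only says the \emph{image} of $t\lrcorner\{D\}$ in $H^2(\mo_X)$ vanishes, so $t\lrcorner\{D\}$ lies in the subspace coming from $H^1(j_*\mo_U)$ via \eqref{br5} — and that lifting $D$ effectively (not just $\mc{L}$) is the stronger requirement captured by the full class $t\lrcorner\{D\}\in H^2_D(\mo_X)$. Disentangling which cohomology group each obstruction naturally lives in, and invoking Lemma \ref{sem2} at exactly the right place to collapse the ambiguity, is where the real care is needed; the rest is a standard first-order \v{C}ech computation.
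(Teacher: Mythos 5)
Your proposal is correct and is essentially the paper's argument: the commutativity of \eqref{phd16} is, as you say, built into \eqref{phd10}, and the substance of the ``if'' direction is exactly the injectivity of $H^1(\N_{D|X})\longrightarrow H^2_D(\mo_X)$ forced by Lemma \ref{sem2} splitting the sequence \eqref{br8}, which is precisely where the paper places the weight. The only difference is presentational: what you re-derive by the first-order \v{C}ech computation (that the obstruction to lifting $D$ is the image of $t$ in $H^1(\N_{D|X})$, through which $\lrcorner\{D\}$ factors) the paper simply quotes from Bloch's Propositions 2.6 and 6.2.
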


\begin{proof}
By \cite[Proposition $6.2$]{b1} there is a commutative diagram
\[
\begin{diagram}
H^1(\mc{T}_X)& \\
\dTo^{u^*}&\rdTo^{\lrcorner \{D\}} \\
H^1(\N_{D|X}) &\rTo^{\lrcorner \{D\}'}&H^2_D(\mo_X)
\end{diagram}
\]
where $u^*$ is the composition $H^1(\T_X) \,\longrightarrow\, H^1(\T_X \otimes \mo_D)
\,\longrightarrow\, H^1(\N_{D|X})$; the first homomorphism in this composition is
induced by restriction and the second one by the natural homomorphism $\T_X\otimes
\mo_D\,\longrightarrow\, \N_{D|X}$. The commutativity of \eqref{phd16} then
follows from the definition of cup-product map given in \eqref{phd10}. 
This proves the first part of the proposition.

Using \cite[Proposition 2.6]{b1}, the divisor $D$ lifts to an effective divisor
in $X_t$ if and only if $u^*(t)\,=\,0$;
as before, $X_t$ the infinitesimal deformation of $X$ along $t$. From Lemma \ref{sem2}
it follows that
\[
\mr{Ext}^1_X(\mc{H}_D^1(\mo_X(D)),\,i_*\N_{D|X})\,=\,0\, .
\]
This implies that the short exact sequence in \eqref{br8} splits. Hence, the induced
homomorphism of global sections 
 $H^0(\mc{H}^1_D(\mo_X)) \,\longrightarrow\, H^0(\mc{H}^1_D(\mo_X(D)))$ is surjective. This
means that \[\lrcorner \{D\}'\,:\,H^1(\N_{D|X})\,\longrightarrow\, H^1(\mc{H}^1_D(\mo_X))
\,\cong\, H^2_D(\mo_X)\] is injective.
Hence, $D$ lifts to an effective divisor in $X_t$ if and only if 
$\lrcorner \{D\}(t)\,=\,0$. This completes the proof. 
\end{proof}

We now apply Proposition \ref{br3} to families of smooth projective varieties.

Let $\pi\,:\,\mc{X}\,\longrightarrow\, B$ be a flat family of smooth projective 
varieties with $X$ being the fiber over a base point $o \,\in\, B$.
For any $u \,\in\, B$, denote the fiber $\pi^{-1}(u)$ by $X_u$.

\begin{rem}\label{br10}
The differential of $\pi$ produces a short exact sequence
\[
0 \,\longrightarrow\, \mc{T}_X \,\longrightarrow\, \mc{T}_{\mc{X}}\vert_X\,=\,
\mc{T}_{\mc{X}} \otimes \mo_X
\,\longrightarrow\, \pi^* T_oB \,\longrightarrow\, 0\, ,
\]
where $T_oB$ is the tangent space to $B$
at $o$. The \emph{Kodaira-Spencer map} $$\rho_{\pi}\,:\,T_oB \,\cong\, H^0(\pi^*(T_oB))
\,\longrightarrow\, H^1(\mc{T}_X)$$ is the boundary homomorphism associated to the
above short exact sequence. For an algebraic line bundle $\mc{L}$ on $X$, 
denote by $\NL(\gamma)$ the \emph{Hodge locus} corresponding to 
the Hodge class $$\gamma \,:=\, c_1(\mc{L})
\,\in\, H^{1,1}(X,\,\mb{Q})$$ (see \cite{v5} for definition of Hodge 
locus). Define $\mc{X}'\,:=\,\pi^{-1}(\NL(\gamma))$. After contracting $B$ if
necessary, there is an invertible sheaf 
$\widetilde{\mc{L}}$ on $\mc{X}'$ such that $\widetilde{\mc{L}}|_X \,\cong\, \mc{L}$
(see \cite[\S~3.3.1]{S1}).
\end{rem}

For any $t \,\in\, T_oB$, let $X_t\, \subset\,\mc{X}$ the infinitesimal deformation of $X$
along $t$. For any $u \,\in \,\NL(\gamma)$, define $\widetilde{\mc{L}}_u\,:=\,
\widetilde{\mc{L}}|_{X_u}$.

Given any $b\,\in\, B$ and an invertible sheaf $\mc{L}_b$ on $X_b$, we say that $\mc{L}$ 
\emph{deforms to an invertible sheaf } $\mc{L}_b$ \emph{ on } $X_b$ if there
exists a connected closed subscheme $W \,\subset\, B$ containing both $o$ and $b$, 
and an invertible sheaf $\mc{L}_W$ on $X_W$, such that $\mc{L}_W|_X \,\cong\, \mc{L}$ and
$\mc{L}_W|_{X_b}\,\cong\, \mc{L}_b$.

{}From the upper-semicontinuity theorem for the dimension of global sections it
follows that $\mc{B}_{\mc{L}}$ is a closed subscheme in $\NL(\gamma)$.

Given a family $\pi$ and $\gamma$ as before, the
\emph{Brill-Noether sub-locus of $\NL(\gamma)$ associated to $\mc{L}$}
is the subset $\mc{B}_{\mc{L}}\, \subset\, B$ consisting of
all $b \,\in\, B$ such that there exists a connected closed subscheme
$W \,\subset\, B$ containing both the points $o$ and $b$, and an invertible sheaf
$\mc{L}_W$ on $X_W$, such that
\begin{enumerate}
\item $\mc{L}_W|_X \,\cong\, \mc{L}$, and 

\item $h^0(\mc{L}_W|_{X_w}) \,\geq\, h^0(\mc{L})$ for all $w \,\in\, W$.
\end{enumerate}
{}From Lefschetz ($1,1$)-theorem it follows that $\mc{B}_{\mc{L}}\,\subset\,\NL(\gamma)$.

\begin{prop}\label{br15}
 The tangent space at the point $o \,\in\, \mc{B}_{\mc{L}}$ is
\[T_o\mc{B}_{\mc{L}}\,=\,
\rho_\pi^{-1}\left(\bigcap\limits_{\overset{D \in |\mc{L}|}{\mr{reduced}}} \lrcorner \{D\}\right).\]
\end{prop}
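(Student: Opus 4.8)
The plan is to work throughout inside $T_o\NL(\gamma)$, since both sides of the claimed equality lie in it. For the right-hand side this is immediate from Proposition \ref{br3}: the cup product $\cup c_1(\mc{L})$ factors through each $\lrcorner\{D\}$, so $\ker(\lrcorner\{D\})\subseteq\ker(\cup c_1(\mc{L}))$ and hence $\rho_\pi^{-1}\big(\bigcap_D\ker(\lrcorner\{D\})\big)\subseteq\rho_\pi^{-1}(\ker\cup c_1(\mc{L}))=T_o\NL(\gamma)$. For the left-hand side it holds because $\mc{B}_{\mc{L}}\subseteq\NL(\gamma)$. Moreover, for any $t\in T_o\NL(\gamma)$ the infinitesimal deformation $X_t$ is contained in $\pi^{-1}(\NL(\gamma))$, so the restriction $\widetilde{\mc{L}}_t:=\widetilde{\mc{L}}|_{X_t}$ is a well-defined lift of $\mc{L}$ to $X_t$.

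The first main step is a cohomological description of $T_o\mc{B}_{\mc{L}}$. In a neighbourhood of $o$, the subscheme $\mc{B}_{\mc{L}}\subseteq\NL(\gamma)$ is the locus where $h^0(\widetilde{\mc{L}}|_{X_b})\ge h^0(\mc{L})$, with its natural scheme structure coming from upper semicontinuity (the Fitting ideals of $R\pi_*\widetilde{\mc{L}}$); since $h^0(\widetilde{\mc{L}}|_{X_o})=h^0(\mc{L})$, the point $o$ lies in the open stratum of this determinantal locus. By the standard tangent-space computation for such loci, combined with cohomology and base change, a vector $t\in T_o\NL(\gamma)$ lies in $T_o\mc{B}_{\mc{L}}$ if and only if the connecting homomorphism $\partial_t\colon H^0(X,\mc{L})\to H^1(X,\mc{L})$ of the short exact sequence $0\to\mc{L}\to\widetilde{\mc{L}}_t\to\mc{L}\to 0$ on $X_t$ vanishes; equivalently, if and only if every global section of $\mc{L}$ extends to a global section of $\widetilde{\mc{L}}_t$ over $X_t$.

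The second step translates this into divisors. If $s\in H^0(X,\mc{L})$ has reduced zero divisor $D=(s)_0$, any lift $\tilde s$ of $s$ to $\widetilde{\mc{L}}_t$ is a non-zero-divisor (because $X$ is integral and $s\ne 0$), so its zero scheme is an effective Cartier divisor on $X_t$, flat over the base and restricting to $D$. Conversely, if $D$ lifts to an effective Cartier divisor $\mc{D}_t$ on $X_t$, then $\mo_{X_t}(\mc{D}_t)$ is a lift of $\mc{L}$; since $X=X_o$ is a surface in $\p3$ we have $H^1(X,\mo_X)=0$, so this lift is isomorphic to $\widetilde{\mc{L}}_t$ and the section cutting out $\mc{D}_t$ provides the required extension of $s$. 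Hence $\partial_t(s)=0$ if and only if $D=(s)_0$ lifts to an effective Cartier divisor on $X_t$. Now the sections $s$ with reduced zero divisor form the preimage in $H^0(X,\mc{L})\setminus\{0\}$ of a dense open subset of $|\mc{L}|\cong\mb{P}(H^0(\mc{L})^{\vee})$ (nonempty by the standing hypothesis that $|\mc{L}|$ has a reduced member), so they span $H^0(X,\mc{L})$; since $\partial_t$ is linear, $\partial_t=0$ if and only if $\partial_t$ vanishes on every such section. Therefore $t\in T_o\mc{B}_{\mc{L}}$ if and only if every reduced $D\in|\mc{L}|$ lifts to an effective Cartier divisor on $X_t$.

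Finally, for $t\in T_o\NL(\gamma)$ and reduced $D\in|\mc{L}|$, Proposition \ref{br3} gives that $D$ lifts to an effective Cartier divisor on $X_t$ if and only if $\rho_\pi(t)\lrcorner\{D\}=0$. Combining this with the previous step, $t\in T_o\mc{B}_{\mc{L}}$ if and only if $\rho_\pi(t)\lrcorner\{D\}=0$ for every reduced $D\in|\mc{L}|$, which is precisely $T_o\mc{B}_{\mc{L}}=\rho_\pi^{-1}\big(\bigcap_{D}\ker(\lrcorner\{D\})\big)$. I expect the delicate point to be the first step: identifying the tangent space of the semicontinuity (Fitting-ideal) locus $\mc{B}_{\mc{L}}$ with $\ker\partial_t$, which requires care with the scheme structure of $\mc{B}_{\mc{L}}$ and with cohomology and base change along the family. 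The use of $H^1(\mo_X)=0$ in the second step — which guarantees that the line-bundle lift of $\mc{L}$ to $X_t$ is unique, so that "$D$ lifts as a divisor" and "$D$ lifts within the class $\widetilde{\mc{L}}_t$" coincide — is also essential, and is where the hypothesis that $X$ is a surface in $\p3$ is really used.
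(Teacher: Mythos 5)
Your proposal is correct and takes essentially the same route as the paper: both characterize $t\in T_o\mc{B}_{\mc{L}}$ (for $t\in T_o\NL(\gamma)$) by the condition that every section of $\mc{L}$ extends to the lift $\mc{L}_t$ on $X_t$, translate this into the lifting of every reduced $D\in|\mc{L}|$ to an effective Cartier divisor on $X_t$, and then conclude with Proposition \ref{br3}. The only differences are in packaging: the paper takes the infinitesimal condition $\dim_{k[\epsilon]/(\epsilon^2)}H^0(\mc{L}_t)\ge h^0(\mc{L})$ as the definition of the tangent vector lying in $T_o\mc{B}_{\mc{L}}$ and compares $k$- and $k[\epsilon]$-dimensions instead of using your connecting homomorphism $\partial_t$ and Fitting-ideal discussion, while it leaves implicit the two details you make explicit (that sections with reduced zero divisor span $H^0(\mc{L})$, and that $h^1(\mo_X)=0$ in the standing surface-in-$\p3$ setup identifies a divisor lift with a section lift of the fixed line-bundle lift).
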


\begin{proof}
For any $t \,\in\, T_oB$, we have $t \,\in\, T_o\NL(\gamma)$ if and only if $\mc{L}$
lifts to an invertible sheaf $\mc{L}_t$ on $X_t$, where $X_t$ is the infinitesimal
deformation of $X$ along $t$ \cite[\S~3.3.1]{S1}. By definition, $t\,\in\,
T_o\mc{B}_{\mc{L}}$ if and only if $t\,\in\, T_o\NL(\gamma)$ and
\begin{equation}\label{eqz1}
\dim_{k[\epsilon]/(\epsilon^2)}
H^0(\mc{L}_t) \,\geq\, h^0(\mc{L})\, .
\end{equation}
By Proposition \ref{br3}, for all reduced divisor $D \,\in\, |\mc{L}|$,
$$\rho_\pi(t) \lrcorner \{D\}\,=\,0$$ if and only if $D$ lifts to an effective Cartier
divisor on $X_t$. Now
this is possible if and only if the natural restriction homomorphism
$H^0(\mc{L}_t) \,\longrightarrow\, H^0(\mc{L})$ is surjective. By the long exact
sequence of cohomologies associated to
\begin{equation}\label{phd11}
0 \,\longrightarrow\, \mc{L} \,\longrightarrow\, \mc{L}_t \xrightarrow{\mbox{ mod t}} \mc{L} \,\longrightarrow\, 0\, ,
\end{equation}
this is equivalent to the statement that
$$\dim_k H^0(\mc{L}_t)\,=\,2h^0(\mc{L})\, .$$ Now, we have
$2\dim_{k[\epsilon]/(\epsilon^2)} H^0(\mc{L}_t)\,=\,\dim_k H^0(\mc{L}_t)$.
 Therefore, for $t \,\in\, T_o\NL(\gamma)$,
the inequality in \eqref{eqz1} holds if and only if
$\rho_\pi(t) \lrcorner \{D\}\,=\,0$ for all reduced $D \,\in\, |\mc{L}|$. This
completes the proof.
\end{proof}

It should be mentioned that it is not true that the
tangent space $T_b\mc{B}_{\mc{L}}$ can be described by a Kodaira-Spencer type formula
$T_bB \,\longrightarrow\, H^1(\T_{X_b})$; it is possible that
 \[T_b\mc{B}_{\mc{L}}\,\not=\, \rho_\pi^{-1}\left(\bigcap\limits_{\overset{D
\in |\mc{L}_b|}{\mr{reduced}}} \lrcorner \{D\}\right)\, .\]
However the following is true.

\begin{cor}\label{br16}
 Suppose that $h^1(\mo_{X_b})\,=\,0$
for all $b \,\in\, \NL(\gamma)$. Then, for any $b\,\in\, \mc{B}_{\mc{L}}$, and a
deformation $\mc{L}_b$ of $\mc{L}$ on $X_b$ with 
$h^0(\mc{L}_b)\,=\,h^0(\mc{L})$ such that a general element of
$|\mc{L}_b|$ is reduced,
\[T_b\mc{B}_{\mc{L}}\,=\,\rho_\pi^{-1}\left(\bigcap\limits_{\overset{D \in |\mc{L}_b|}{\mr{reduced}}} \lrcorner \{D\}\right)\, ,\]
where $\rho_\pi\,:\,T_bB \,\longrightarrow\, H^1(\T_{X_b})$ is the associated
Kodaira-Spencer map.
\end{cor}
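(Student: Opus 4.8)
The plan is to derive Corollary \ref{br16} from Proposition \ref{br15} by base-changing the entire construction of Section 3 from the reference fiber $X=X_o$ to the fiber $X_b$, and then addressing the single obstruction that prevented a naive Kodaira--Spencer description at a general point, namely the failure of the sequence \eqref{br8} to split. First I would observe that the hypothesis $h^1(\mo_{X_b})=0$ for all $b\in\NL(\gamma)$ is exactly what is needed so that, by semicontinuity and the exponential (or the relevant algebraic Picard) sequence, the Hodge locus $\NL(\gamma)$ near $b$ is again cut out by a cup-product condition and $\mc{L}$ deforms as an invertible sheaf along first-order deformations of $X_b$ precisely when the Kodaira--Spencer image lies in $\ker(\cup c_1(\mc{L}_b))$; more plainly, $h^1(\mo_{X_b})=0$ kills the discrepancy between "the Hodge class deforms" and "the line bundle deforms" at $b$, so that $T_b\NL(\gamma)=\rho_\pi^{-1}(\ker\cup c_1(\mc{L}_b))$ with $\rho_\pi:T_bB\to H^1(\T_{X_b})$.

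Next I would rerun, verbatim, the local-cohomology constructions of Section 3 with $X_b$ in place of $X$ and a reduced $D\in|\mc{L}_b|$ (such $D$ exists by the hypothesis that a general element of $|\mc{L}_b|$ is reduced, and $h^0(\mc{L}_b)=h^0(\mc{L})>1$ guarantees the linear system is positive-dimensional): one gets the class $\{D\}\in H^0(\mc{H}^1_D(\Omega^1_{X_b}))$, the inner-multiplication map $\lrcorner\{D\}:H^1(\T_{X_b})\to H^2_D(\mo_{X_b})$, the factorization $\cup c_1(\mc{L}_b)=(\lrcorner\{D\})$ followed by $H^2_D(\mo_{X_b})\to H^2(\mo_{X_b})$ as in \eqref{phd10}, and the commutative triangle of Proposition \ref{br3} relating $\lrcorner\{D\}$, $u^*$ and $\lrcorner\{D\}'$. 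Then Proposition \ref{br3} itself applies on $X_b$: for $t\in T_bB$ with $\rho_\pi(t)\in\ker\cup c_1(\mc{L}_b)$, the divisor $D$ lifts to an effective Cartier divisor on the first-order deformation $X_{b,t}$ iff $\rho_\pi(t)\lrcorner\{D\}=0$. The argument of Proposition \ref{br15} now goes through word for word on $X_b$: lifting every reduced $D\in|\mc{L}_b|$ is equivalent, via the sequence \eqref{phd11} for $\mc{L}_b$, to $\dim_k H^0((\mc{L}_b)_t)=2h^0(\mc{L}_b)=2h^0(\mc{L})$, which is in turn equivalent to \eqref{eqz1} for $\mc{L}_b$. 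Finally I would identify the condition "$\mc{L}$ deforms, near $b$, to a sheaf with $h^0\ge h^0(\mc{L})$" with membership in $T_b\mc{B}_{\mc{L}}$: since $b\in\mc{B}_{\mc{L}}$ and $h^0(\mc{L}_b)=h^0(\mc{L})$, the subscheme $W\ni o,b$ witnessing $b\in\mc{B}_{\mc{L}}$ supplies $\widetilde{\mc{L}}$ with $\widetilde{\mc{L}}|_{X_b}\cong\mc{L}_b$ and $h^0$ constant equal to $h^0(\mc{L})$ in a neighborhood, so the deformation-theoretic tangent space to $\mc{B}_{\mc{L}}$ at $b$ computed using $\mc{L}_b$ coincides with the one computed using $\mc{L}$, giving $T_b\mc{B}_{\mc{L}}=\rho_\pi^{-1}(\bigcap_{D\in|\mc{L}_b|,\ \mr{reduced}}\lrcorner\{D\})$.

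The main obstacle is the splitting of \eqref{br8} at the fiber $X_b$, i.e.\ showing $\mr{Ext}^1_{X_b}(\mc{H}^1_D(\mo_{X_b}(D)),\,i_*\N_{D|X_b})=0$ so that the induced map $\lrcorner\{D\}':H^1(\N_{D|X_b})\to H^1(\mc{H}^1_D(\mo_{X_b}))\cong H^2_D(\mo_{X_b})$ is injective; this is precisely the step that Proposition \ref{br3} used on $X=X_o$ and the reason the remark preceding the corollary warns that a Kodaira--Spencer description can fail in general. The resolution is that Lemma \ref{sem2} is purely local on a smooth surface and on the divisor $D$ — its proof only uses the short exact sequence of Lemma \ref{ph14}, the adjunction isomorphism $\mc{E}xt^m_{X_b}(-,i_*\N)\cong\mc{E}xt^m_D(-\otimes\mo_D,\N)$, and the vanishing $\mc{H}^1_D(\mo_{X_b}(D))\otimes_{\mo_{X_b}}\mo_D=0$, all of which hold verbatim for the smooth projective surface $X_b\subset\p3$ and the reduced $D\in|\mc{L}_b|$ — so Lemma \ref{sem2} applies unchanged to $X_b$ and the splitting follows. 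What genuinely uses the new hypothesis $h^1(\mo_{X_b})=0$ (beyond what was needed at $o$, where no such hypothesis appeared because one always has the reference line bundle $\mc{L}$) is only the first step: ensuring that $T_b\NL(\gamma)$ is governed by $\rho_\pi$ and $\cup c_1(\mc{L}_b)$ at the possibly-non-special point $b$, i.e.\ that the obstruction to deforming the line bundle as opposed to merely its first Chern class vanishes. Once these two points are in place — Lemma \ref{sem2} on $X_b$ for the splitting, and $h^1(\mo_{X_b})=0$ for the Hodge-locus description — the proof of Proposition \ref{br15} transplants without change, yielding the displayed formula.
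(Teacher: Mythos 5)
Your overall strategy---rerun the argument of Proposition \ref{br15} at the point $b$ with the deformed sheaf $\mc{L}_b$, after checking that the local-cohomology constructions, Proposition \ref{br3} and Lemma \ref{sem2} apply verbatim to $X_b$ and a reduced $D\in|\mc{L}_b|$---is exactly the paper's, and that part of your proposal is fine. The gap lies in the step you treat as essentially automatic, namely the identification of $T_b\mc{B}_{\mc{L}}$ with the tangent space of the Brill--Noether locus anchored at $b$ and defined by the single sheaf $\mc{L}_b$, and in where you locate the use of the hypothesis $h^1(\mo_{X_b})=0$. In the paper this hypothesis is used precisely to guarantee that the deformation of $\mc{L}$ to an invertible sheaf on $X_b$ (and on nearby fibers) is \emph{unique}; it is this uniqueness, together with $h^0(\mc{L}_b)=h^0(\mc{L})$ (so the two Brill--Noether thresholds agree), that lets one conclude $T_b\mc{B}_{\mc{L}}=T_b\mc{B}_{\mc{L}_b}$ and then apply the first-order analysis of Proposition \ref{br15} at $b$. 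Your justification of this identification---that the witness $W$ for $b\in\mc{B}_{\mc{L}}$ supplies $\widetilde{\mc{L}}$ with $h^0$ \emph{constant} equal to $h^0(\mc{L})$ in a neighborhood---is both unjustified (along $W$ one only knows $h^0\ge h^0(\mc{L})$, and off the Brill--Noether locus the dimension can drop, as the example of Section \ref{bsec4} shows) and insufficient: without uniqueness of the line-bundle deformation, a point $u$ near $b$ could belong to $\mc{B}_{\mc{L}}$ because some \emph{other} deformation $\mc{L}'_u\not\cong\widetilde{\mc{L}}|_{X_u}$ has many sections, so the germ of $\mc{B}_{\mc{L}}$ at $b$ could be strictly larger than the locus governed by $\mc{L}_b$, and the displayed Kodaira--Spencer formula would fail. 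This is exactly the failure mode the remark preceding the corollary warns about, so an argument that never invokes uniqueness cannot be complete.

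Conversely, the place where you do spend the hypothesis---asserting that $h^1(\mo_{X_b})=0$ is ``exactly what is needed'' to get $T_b\NL(\gamma)=\rho_\pi^{-1}\bigl(\ker\cup c_1(\mc{L}_b)\bigr)$---is not where it is really required: the equivalence ``$t$ is tangent to the Hodge locus if and only if the (chosen) line bundle lifts to $X_t$'' is the same appeal to \cite[\S~3.3.1]{S1} already made at $o$ in Proposition \ref{br15}, with no $h^1$ assumption. The repair is simply to move the hypothesis to the right step: $h^1(\mo)=0$ along $\NL(\gamma)$ gives uniqueness of the deformation $\mc{L}_b$ of $\mc{L}$, hence every witness for membership in $\mc{B}_{\mc{L}}$ near $b$ restricts fiberwise to the same sheaf, hence the germ (and tangent space) of $\mc{B}_{\mc{L}}$ at $b$ coincides with that of $\mc{B}_{\mc{L}_b}$; after that, your transplanted Proposition \ref{br15} finishes the proof exactly as in the paper.
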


\begin{proof}
Since $h^1(\mo_{X_u})\,=\,0$ for all $u \,\in\, \NL(\gamma)$, there is an unique
deformation $\mc{L}_b$ of $\mc{L}$ to an invertible sheaf on $X_b$. By assumption
we have $h^0(\mc{L}_b)\,=\,h^0(\mc{L})$. Since the complete linear system $\mc{L}$ on $\mc{X}_o$ deforms to the complete linear system $H^0(\mc{L}_b)$ in this case, 
observe that $T_b\mc{B}_{\mc{L}}=T_b\mc{B}_{\mc{L}_b}$.
Using the arguments in the proof of Proposition \ref{br15} it follows that
\[T_b\mc{B}_{\mc{L}}\,=\,T_b\mc{B}_{\mc{L}_b}\,=\,\rho_\pi^{-1}\left(\bigcap\limits_{\overset{D \in
|\mc{L}_b|}{\mr{reduced}}} \lrcorner \{D\}\right)\, .\]
This completes the proof.
\end{proof}

\begin{thm}\label{br12}
 If there is an open neighborhood $U \,\subset\, \NL(\gamma)$ of $o$ such that
$h^0(\widetilde{\mc{L}}_u)\,=\,h^0(\mc{L})$ for all $u \,\in\, U$, then $\NL(\gamma) \cap U=\mc{B}_{\mc{L}} \cap U$ and
 $T_u\NL(\gamma)\,=\,T_u\mc{B}_{\mc{L}}$ for all $u \in U$.
\end{thm}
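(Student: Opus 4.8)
The plan is to reduce Theorem \ref{br12} to the infinitesimal statement in Proposition \ref{br15} together with a deformation-theoretic argument showing that the closed immersion $\mc{B}_{\mc{L}}\cap U\,\hookrightarrow\,\NL(\gamma)\cap U$ is an isomorphism. First I would observe that $\mc{B}_{\mc{L}}\,\subset\,\NL(\gamma)$ always holds, so it suffices to prove the reverse inclusion on $U$, and equality of tangent spaces at every $u\,\in\,U$. For the set-theoretic inclusion, pick any $b\,\in\,\NL(\gamma)\cap U$. Shrinking $U$ so that it is connected, the sheaf $\widetilde{\mc{L}}$ furnished by Remark \ref{br10} restricts to an invertible sheaf on $\pi^{-1}(U)$ with $\widetilde{\mc{L}}|_X\,\cong\,\mc{L}$; taking $W\,=\,U$ and $\mc{L}_W\,=\,\widetilde{\mc{L}}|_{\pi^{-1}(U)}$, condition (1) in the definition of $\mc{B}_{\mc{L}}$ is immediate, and condition (2), namely $h^0(\widetilde{\mc{L}}_w)\,\geq\,h^0(\mc{L})$ for all $w\,\in\,U$, is exactly the hypothesis (in fact it holds with equality). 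Hence $b\,\in\,\mc{B}_{\mc{L}}$, giving $\NL(\gamma)\cap U\,=\,\mc{B}_{\mc{L}}\cap U$ as sets.

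Next I would upgrade this to an equality of schemes by checking it locally, using that both are closed subschemes of $B$ (for $\mc{B}_{\mc{L}}$ this is the upper-semicontinuity remark preceding Proposition \ref{br15}) and that a closed immersion which is a bijection on points and an isomorphism on all Zariski tangent spaces is an isomorphism in characteristic zero. So the real content is the tangent space comparison $T_u\NL(\gamma)\,=\,T_u\mc{B}_{\mc{L}}$ for each $u\,\in\,U$. The inclusion $T_u\mc{B}_{\mc{L}}\,\subseteq\,T_u\NL(\gamma)$ is automatic. For the reverse, I would apply the characterization from the proof of Proposition \ref{br15}: given $t\,\in\,T_u\NL(\gamma)$, the sheaf $\mc{L}$ (more precisely $\widetilde{\mc{L}}_u$) lifts to an invertible sheaf $\mc{L}_t$ on the first-order deformation $X_t$, and $t\,\in\,T_u\mc{B}_{\mc{L}}$ precisely when $\dim_{k[\epsilon]/(\epsilon^2)}H^0(\mc{L}_t)\,\geq\,h^0(\mc{L})$. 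The key point is that the function $u'\,\mapsto\,h^0(\widetilde{\mc{L}}_{u'})$ is constant on $U$ by hypothesis; constancy of $h^0$ forces $h^1$-type obstructions to lifting sections to vanish, so that by base-change / the long exact sequence of \eqref{phd11} the restriction map $H^0(\mc{L}_t)\,\longrightarrow\,H^0(\widetilde{\mc{L}}_u)$ is surjective, whence $\dim_k H^0(\mc{L}_t)\,=\,2h^0(\mc{L})$ and therefore $\dim_{k[\epsilon]/(\epsilon^2)}H^0(\mc{L}_t)\,=\,h^0(\mc{L})$. This places $t$ in $T_u\mc{B}_{\mc{L}}$.

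I would phrase the surjectivity step carefully, since it is the heart of the matter: constancy of $h^0$ of $\widetilde{\mc{L}}$ along $U$ (a \emph{scheme-theoretic} statement, valid on all infinitesimal neighborhoods after possibly using that $B$ may be taken reduced or the family flat) implies that $\pi_*\widetilde{\mc{L}}$ is locally free of rank $h^0(\mc{L})$ on $U$ and its formation commutes with base change, by Grauert's theorem; restricting to the fat point $\Spec k[\epsilon]/(\epsilon^2)\,\hookrightarrow\,U$ determined by $t$ then gives $H^0(\mc{L}_t)\,\cong\,(\pi_*\widetilde{\mc{L}})_u\otimes k[\epsilon]/(\epsilon^2)$, a free $k[\epsilon]/(\epsilon^2)$-module of rank $h^0(\mc{L})$, which is exactly \eqref{eqz1} with equality. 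Combined with Proposition \ref{br15}, which identifies $T_o\mc{B}_{\mc{L}}$ with $\rho_\pi^{-1}$ of the intersection of the maps $\lrcorner\{D\}$, this yields $T_u\NL(\gamma)\,\subseteq\,T_u\mc{B}_{\mc{L}}$ and hence equality.

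The main obstacle I anticipate is making the base-change argument airtight when $\NL(\gamma)$ is non-reduced: the hypothesis ``$h^0(\widetilde{\mc{L}}_u)\,=\,h^0(\mc{L})$ for all $u\,\in\,U$'' is a priori only about closed points, so one must either invoke that the relevant cohomology-and-base-change / Grauert statement only needs constancy over the reduction together with flatness, or argue directly with the snake-lemma / splitting technology of Proposition \ref{br3} (the vanishing $\mr{Ext}^1_X(\mc{H}^1_D(\mo_X(D)),i_*\N_{D|X})\,=\,0$ from Lemma \ref{sem2}) applied fiberwise to reduce everything to the first-order statement about lifting divisors $D\,\in\,|\widetilde{\mc{L}}_u|$. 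I would take the latter route where possible, since it keeps the argument inside the machinery already developed in Section 3 and sidesteps subtleties about the scheme structure on $\NL(\gamma)$.
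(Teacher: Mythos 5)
Your proposal is correct and follows essentially the same route as the paper: the hypothesis gives $\NL(\gamma)\cap U=\mc{B}_{\mc{L}}\cap U$ directly, and the tangent-space equality is reduced, via the criterion \eqref{eqz1} from the proof of Proposition \ref{br15} and the long exact sequence of \eqref{phd11}, to showing that constancy of $h^0(\widetilde{\mc{L}}_u)$ on $U$ forces $\dim_{k[\epsilon]/(\epsilon^2)}H^0(\mc{L}_t)=h^0(\mc{L})$ for every $t\in T_u\NL(\gamma)$ --- exactly the step the paper asserts (and then converts into divisor lifting via Proposition \ref{br3}). Your explicit Grauert/base-change justification, and your flagging of the possible non-reducedness of $\NL(\gamma)$, only make explicit what the paper leaves implicit, so there is no substantive divergence.
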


\begin{proof}
By the hypothesis on the theorem, $\NL(\gamma) \cap U=\mc{B}_{\mc{L}} \cap U$.
To prove the statement on the tangent space, note that
$$\dim_{k[\epsilon]/(\epsilon^2)} H^0(\mc{L}_t)\,=\, h^0(\mc{L})$$
for all $t\,\in\, T_o\NL(\gamma)$, because $h^0(\mc{L}_u)\,=\,
h^0(\mc{L})$ for all $u\,\in\, U$. Using the long exact sequence of cohomologies
associated to (\ref{phd11}),
$$
\dim_k H^0(\mc{L}_t) \,\leq\, 2 h^0(\mc{L})
$$
with the equality holding if and only if the
induced homomorphism $H^0(\mc{L}_t) \,\longrightarrow\, H^0(\mc{L})$ is surjective. 
Now, $\dim_k H^0(\mc{L}_t)\,=\,
2\dim_{k[\epsilon]/(\epsilon^2)} H^0(\mc{L}_t)$, and $\dim_{k[\epsilon]/(\epsilon^2)}
H^0(\mc{L}_t) \,\geq\, r$. Hence, 
$H^0(\mc{L}_t) \,\longrightarrow\, H^0(\mc{L})$ is surjective. In other words, every
$D\,\in\, |\mc{L}|$ lifts to an effective Cartier divisor on $X_t$. Then, Proposition
\ref{br3} implies that $t \lrcorner \{D\}\,=\,0$ for any $t \,\in\,
T_o\NL(\gamma)$ and any reduced $D \,\in\, |\mc{L}|$. Now,
$T_o\NL(\gamma) \,\subset\, T_o(\mc{B}_\mc{L})$ by
Proposition \ref{br15}. On the other hand, from the diagram \eqref{phd16} it
follows that the reverse inclusion holds. Hence we conclude that
$T_o\NL(\gamma) \,=\, T_o(\mc{B}_\mc{L})$. Similarly, using the proof of Corollary
\ref{br16}, it can be proved that $T_u\NL(\gamma)\,=\,T_u\mc{B}_{\mc{L}}$
for all $u \,\in \,U$. This completes the proof.
\end{proof}

Observe that when $\dim_o \mc{B}_\mc{L}<\dim_o \NL(\gamma)$, it is not obvious 
$T_o\mc{B}_\mc{L} \,\subsetneqq\, T_o\NL(\gamma)$. In particular, there are
examples of families of smooth projective families $\pi\,:\,\mc{X}\,
\longrightarrow\, B$ and 
an invertible sheaf $\mc{L}$ on $\mc{X}$ such that there exists a point $o \,\in\, B$ for 
which
\begin{enumerate}
\item $h^0(\mc{L}|_{\mc{X}_o})\,>\,h^0(\mc{L}|_{\mc{X}_u})$ for all $u\,\in \,
B \backslash \{o\}$,

\item but $h^0(\mc{L}|_{\mc{X}_t})\,=\,h^0(\mc{L}|_{\mc{X}_o})$
for all \emph{first order} infinitesimal deformation $\mc{X}_t$ of
$\mc{X}_o$, $t \,\in\, T_oB$.
\end{enumerate}
Of course, for any such $\pi$ and invertible sheaf $\mc{L}$, there exists some higher 
order infinitesimal deformation $\mc{X}_{t'}$ of $\mc{X}_o$ for which 
$h^0(\mc{L}|_{\mc{X}_{t'}})\,<\,h^0(\mc{L}|_{\mc{X}_o})$. However, in the next section, we 
produce examples of $\pi$ and $\mc{L}$ such that $(1)$ holds as before and 
furthermore, there exists $t \,\in\, T_oB$ such that 
$h^0(\mc{L}|_{\mc{X}_t})\,<\,h^0(\mc{L}|_{\mc{X}_o})$.

\section{Jumping locus of linear systems}\label{bsec4}

In this section, we produce a family $\pi\,:\,\mc{X} \,\longrightarrow\, B$ and an
invertible sheaf $\mc{M}$ on $\mc{X}$ such that there exists a point $o \,\in\, B$ for
which $$B_{\mc{M}_o} \,\subsetneqq\, \NL(c_1(\mc{M}_o)) \,=\, B$$ and
$T_oB_{\mc{M}_o} \,\subsetneqq\, T_o\NL(\gamma)$. This gives an example of a classical question: Given a 
family of smooth, projective varieties $\pi\,:\,\mc{X} \,\longrightarrow\, B$, when does
there exist a closed fiber $\mc{X}_o$ and an effective divisor $D\,\subset\, \mc{X}_o$
such that there is an infinitesimal deformation $\mc{X}_t$ of $\mc{X}_o$ along
some tangent $t \,\in\, T_oB$ for which the Hodge class $[D] \,\in\, H^{1,1}(\mc{X}_o,
\,\mb{Q})$ lifts to a Hodge class on $\mc{X}_t$ but $D$ does not lift to an
effective Cartier divisor on $\mc{X}_t$?

\begin{se}
Let $Y$ be a smooth projective variety of dimension at least $2$, and let $\mc{L}$
be an invertible sheaf on $Y$. Suppose that the base locus $B$ of $H^0(\mc{L})$ is a (finite)
collection of closed points containing a point $p$ with multiplicity $1$. Define
$B_p\,:=\,B\backslash \{p\}$ and $Z\,:=\,Y\backslash B_p$. 
Consider the closed subscheme $B_p \times Y+\Delta\,\subset\, Y \times Y$, where
$\Delta\,\subset\, Y \times Y$ is the diagonal, and define
\[E_0\,:=\, (B_p \times Y+\Delta) \cap (Y \times Z)\, .\]
\end{se}

\begin{note}
Let $\ov{\pi}\,:\,\ov{Y}\,\longrightarrow\, Y \times Z$ be the blow-up along $E_0$.
The exceptional divisor will be denoted by $E$. Define
$$p_i\,:=\,\pr_i \circ \ov{\pi}\, ,$$ where $\pr_1$ (respectively, $\pr_2$) is
the projection of $Y \times Z$ to $Y$ (respectively, $Z$). 
Let $$\mc{M}\,:=\,p_1^*\mc{L} \otimes \mo_{\ov{Y}}(-E)$$ the invertible sheaf on $\ov{Y}$. 
\end{note}

\begin{lem}\label{br22}
The morphism $p_2\,:\,\ov{Y}\,\longrightarrow\, Z$ is flat. Furthermore, $p_2|_{E}$
is flat.
\end{lem}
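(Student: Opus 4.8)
The plan is to verify flatness of $p_2 \colon \ov{Y} \longrightarrow Z$ by reducing it to two facts: that blowing up a locally complete intersection subscheme behaves well in families, and that the center $E_0$ itself maps flatly onto $Z$. First I would observe that $E_0 = (B_p \times Y + \Delta) \cap (Y \times Z)$ is a closed subscheme of $Y \times Z$ which is flat over $Z$ via $\pr_2$. Indeed, $\Delta \cap (Y \times Z) \cong Z$ maps isomorphically (hence flatly) to $Z$, and $(B_p \times Y) \cap (Y \times Z) = B_p \times Z$ maps to $Z$ as a finite union of copies of $Z$, again flat; since $p \notin B_p$, the two pieces are disjoint over the generic point and one checks that the scheme-theoretic union is still flat over $Z$ (over $Z$ the fiber is $B_p \sqcup \{q\}$, a reduced length-$(|B_p|+1)$ scheme, of constant length). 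Moreover $E_0$ is a locally complete intersection in $Y \times Z$: away from $\Delta$ it is $B_p \times Z$, which is a lci since $B_p$ consists of reduced points in the smooth $Y$; near $\Delta$ it agrees with the diagonal, which is a regular embedding in $Y \times Z$ as $Y$ is smooth.

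Next I would use that $\ov{\pi} \colon \ov{Y} \longrightarrow Y \times Z$ is the blow-up along the lci subscheme $E_0$, so $\ov{Y} = \operatorname{Proj}\left(\bigoplus_{n \geq 0} \I_{E_0}^n\right)$, and the formation of this Rees algebra (and its Proj) commutes with the base change $Z' \longrightarrow Z$ for any point or Artinian thickening, precisely because $E_0$ is flat over $Z$ and lci — the standard fact (e.g. as in the local-complete-intersection arguments already invoked in Proposition \ref{ph15}) that blowing up a relative lci subscheme commutes with base change on the base. Combined with $Y \times Z$ being flat (indeed smooth) over $Z$ via $\pr_2$, and the fact that for each $q \in Z$ the fiber of $\ov{Y} \to Z$ is exactly the blow-up of $Y$ along $(B_p \cup \{q\})$ — a blow-up of a smooth variety along a zero-dimensional lci, hence an integral scheme of dimension $\dim Y$ with no embedded components — I can conclude flatness of $p_2$ by the fiberwise criterion: $\ov{Y}$ is a reduced irreducible scheme, $Z$ is smooth (in particular reduced and irreducible), $p_2$ is dominant, and all fibers have the same dimension $\dim Y$, so $p_2$ is flat over the smooth base $Z$ by the standard "flatness = equidimensionality over a smooth/regular base for maps between Cohen–Macaulay schemes" criterion (miracle flatness), once one checks $\ov{Y}$ is Cohen–Macaulay, which it is as a blow-up of a smooth variety along a smooth — or lci — center with the resulting scheme again smooth away from $E$ and with $E$ a Cartier divisor.

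For the second assertion, that $p_2|_E$ is flat, I would note $E = \ov{\pi}^{-1}(E_0)$ is the projectivized normal cone of $E_0$ in $Y \times Z$; since $E_0$ is lci in $Y \times Z$, this normal cone is a vector bundle over $E_0$, namely $E = \mathbb{P}(\N_{E_0 | Y \times Z})$, a projective bundle over $E_0$. Composing the bundle projection $E \to E_0$ (smooth, hence flat) with the flat map $E_0 \to Z$ established above yields that $p_2|_E \colon E \longrightarrow Z$ is flat, being a composition of flat morphisms. Alternatively, one sees directly that the fiber of $E$ over $q \in Z$ is the exceptional divisor of the blow-up of $Y$ along $B_p \cup \{q\}$, which is a disjoint union of $|B_p| + 1$ copies of $\mathbb{P}^{\dim Y - 1}$, of constant dimension and constant Hilbert polynomial, so flatness again follows from the equidimensionality criterion over the reduced irreducible base $Z$ (or from constancy of Hilbert polynomials over a reduced base).

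I expect the main obstacle to be the careful verification that the scheme-theoretic union $B_p \times Y + \Delta$, intersected with $Y \times Z$, is flat over $Z$ and lci at points of $\Delta \cap (B_p \times Z)$-closure — but the key simplification, which makes everything go through, is the hypothesis that $p$ appears in the base locus $B$ with multiplicity $1$, so that on $Z = Y \setminus B_p$ the diagonal and $B_p \times Z$ are genuinely disjoint; hence over $Z$ one never has to analyze a collision between the moving point $q$ and the fixed points $B_p$, and $E_0$ is a disjoint union of a copy of the diagonal and a trivial family $B_p \times Z$, each piece visibly flat and lci over $Z$. With that disjointness in hand, both flatness statements reduce to the two clean inputs above (base-change compatibility of lci blow-ups, and flatness of the center), and no delicate computation is actually needed.
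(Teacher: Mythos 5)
Your proof is correct and takes essentially the same route as the paper, whose entire argument is the one-line observation that $E_0$ is flat over $Z$ under $\pr_2$; you simply make explicit the details the paper leaves implicit (the disjointness over $Z$ of $\Delta\cap(Y\times Z)$ and $B_p\times Z$, the fiberwise regular-embedding condition guaranteeing that the Rees algebra is $Z$-flat and commutes with base change, and the identification $E\cong\mathbb{P}(\N_{E_0|Y\times Z})$ giving flatness of $p_2|_E$). One small quibble: the disjointness follows directly from the definition $Z=Y\backslash B_p$ rather than from the multiplicity-one hypothesis on $p$, and your assertion that $B_p$ consists of reduced (hence lci) points is an implicit reading of the intended scheme structure rather than something stated in the setup.
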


\begin{proof}
This is because $E_0$ is flat over $Z$ under the second projection map $\pr_2$.
\end{proof}

\begin{note}
In Remark \ref{br10}, replace the family $\pi\,:\,\mc{X} \,\longrightarrow\, B$ by $p_2$. 
 We have the corresponding Kodaira-Spencer map $\rho_\pi\,:\, T_pY
\,\longrightarrow\, H^1(\T_{\ov{Y}_p})$.
\end{note}

\begin{note}
Denote by $\ov{Y}_q$ (respectively, $E(q)$) the fiber over $q$ under the morphism
$p_2$ (respectively, $p_2|_E$). 
Defile $\mc{M}_q\,:=\,\mc{M} \otimes {\ov{Y}_q}$ and $p_1(q)\,:=\,p_1|_{\ov{Y}_q}$.
Observe that this morphism $p_1(q)$ is surjective as it is simply the blow-up of 
$Y$ along $B_p \cup q$.
\end{note}

\begin{thm}\label{br1}
The inequality $h^0(\mc{M}_y)\,<\,h^0(\mc{M}_p)$ holds for
any $y \,\not=\, p$. In particular, for $\gamma\,=\,c_1(\mc{M}_p)$ on $\ov{Y}_p$,
$$\mc{B}_{\mc{M}_p} \,\not=\, \NL(\gamma)\,=\,Z\, .$$
\end{thm}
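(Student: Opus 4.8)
The plan is to compute both sides of the asserted inequality directly from the geometry of the blow-ups. Recall that $\ov{Y}_q$ is the blow-up of $Y$ along the finite set $B_p \cup \{q\}$, and $\mc{M}_q = p_1(q)^*\mc{L} \otimes \mo_{\ov{Y}_q}(-E(q))$, where $E(q)$ is the exceptional divisor over $B_p \cup \{q\}$. First I would use the projection formula together with the fact that $p_1(q)_*\mo_{\ov{Y}_q}(-E(q)) = \I_{B_p\cup\{q\}}$, the ideal sheaf of the finite scheme $B_p \cup \{q\} \subset Y$ (for $q \notin B_p$ this is a reduced length-$(|B_p|+1)$ subscheme, and for $q = p$ it is the reduced scheme $B_p \cup \{p\} = B$). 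This gives
\[
h^0(\mc{M}_q) \,=\, h^0\bigl(Y,\, \mc{L}\otimes \I_{B_p\cup\{q\}}\bigr)\,,
\]
i.e. the dimension of the space of sections of $\mc{L}$ vanishing on $B_p$ and at $q$.

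The key observation is the role of the base locus $B$ of $H^0(\mc{L})$. Every section of $\mc{L}$ automatically vanishes on all of $B = B_p \cup \{p\}$, so imposing vanishing along $B_p$ is no condition at all: $h^0(\mc{L}\otimes \I_{B_p}) = h^0(\mc{L})$. When $q = p$, imposing the extra vanishing at $p$ is also no condition, since $p \in B$ already; hence $h^0(\mc{M}_p) = h^0(\mc{L})$. When $q \neq p$ (and $q \notin B_p$, i.e. $q \notin B$), the point $q$ is \emph{not} a base point, so there exists a section of $\mc{L}$ not vanishing at $q$; imposing vanishing at $q$ is therefore a nontrivial linear condition, and $h^0(\mc{M}_y) = h^0(\mc{L}\otimes \I_{B_p \cup \{y\}}) = h^0(\mc{L}) - 1 < h^0(\mc{L}) = h^0(\mc{M}_p)$. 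This proves the first assertion. (One should also note $h^0(\mc{L}) > 1$ is implicit so that $h^0(\mc{M}_y) \geq 1$ and the sheaf is genuinely nontrivial, but only the strict inequality is needed.)

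For the second assertion, observe that $\NL(\gamma) = Z = Y \backslash B_p$: indeed $\gamma = c_1(\mc{M}_p)$ visibly extends to the Hodge class $c_1(\mc{M})$ on the whole family $p_2 : \ov{Y} \to Z$, so the Hodge locus is all of $Z$ (here $B$ in the statement of the theorem plays the role of the base $B$ of the family $\pi$, which is $Z$). On the other hand, by definition $\mc{B}_{\mc{M}_p}$ consists of those $b \in \NL(\gamma)$ admitting a connected closed $W \ni p, b$ and a deformation $\mc{M}_W$ of $\mc{M}_p$ with $h^0(\mc{M}_W|_{\ov{Y}_w}) \geq h^0(\mc{M}_p)$ for all $w \in W$. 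Since $h^1(\mo_{\ov{Y}_q})$ need not vanish in general one must be slightly careful, but the relevant deformation of $\mc{M}_p$ along $Z$ is $\mc{M}$ itself (it is the unique one compatible with the family up to the $\mr{Pic}^0$-ambiguity, which does not affect $h^0$ of a sheaf with fixed first Chern class in the presence of the section structure); and we have just shown $h^0(\mc{M}_y) < h^0(\mc{M}_p)$ for every $y \neq p$. Hence no point $y \neq p$ lies in $\mc{B}_{\mc{M}_p}$, so $\mc{B}_{\mc{M}_p} = \{p\} \subsetneq Z = \NL(\gamma)$.

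The main obstacle I expect is the bookkeeping in the second half: verifying that \emph{every} deformation $\mc{M}_W$ of $\mc{M}_p$ as an invertible sheaf (allowing twists by elements of $\mr{Pic}^0(\ov{Y}_w)$) still has $h^0 \leq h^0(\mc{L}) - 1$ on fibers over $y \neq p$, so that the Brill-Noether condition genuinely fails there. This requires either invoking an auxiliary hypothesis ($h^1(\mo_{\ov{Y}_q}) = 0$, which would hold e.g. when $h^1(\mo_Y) = 0$ since blowing up points does not change $h^1(\mo)$) to pin down the deformation uniquely, or a direct argument that twisting by a degree-zero line bundle cannot increase $h^0$ past the value forced by $q$ being a non-base-point; I would handle this by reducing to the unique-deformation case via the Picard-functor vanishing, which is the cleanest route and is how the analogous points are treated earlier in the text (cf. Corollary \ref{br16}). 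Everything else — the projection-formula computation and the identification $\NL(\gamma) = Z$ — is routine.
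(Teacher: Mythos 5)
Your argument for the inequality is, in substance, the paper's own: the paper restricts the exact sequence \eqref{br21} to the fiber $\ov{Y}_q$ (using the flatness of $E$ over $Z$ from Lemma \ref{br22}) and then pushes down by $p_1(q)$, arriving at precisely your identification $H^0(\mc{M}_q)\,\cong\,\ker\bigl(H^0(\mc{L})\to H^0(\mc{L}\otimes\mo_{E_0(q)})\bigr)\,=\,H^0(\mc{L}\otimes\I_{B_p\cup\{q\}})$, after which the base-locus hypothesis gives $h^0(\mc{M}_p)=h^0(\mc{L})>h^0(\mc{M}_y)$ for $y\neq p$; your fiberwise use of $p_1(q)_*\mo_{\ov{Y}_q}(-E(q))=\I_{B_p\cup\{q\}}$ and the projection formula is just a more direct packaging of the same computation, and the identification $\NL(\gamma)=Z$ via $c_1(\mc{M})$ is also the paper's. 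Where you genuinely diverge is the deduction $\mc{B}_{\mc{M}_p}\neq Z$: the paper disposes of this with ``by definition,'' implicitly taking the witness deformation of $\mc{M}_p$ to be $\mc{M}$ itself, whereas you correctly point out that the definition of $\mc{B}_{\mc{M}_p}$ quantifies over \emph{all} invertible sheaves $\mc{M}_W$ on $p_2^{-1}(W)$ restricting to $\mc{M}_p$, whose fiberwise restrictions may differ from $\mc{M}_w$ by elements of $\mr{Pic}^0(\ov{Y}_w)\cong\mr{Pic}^0(Y)$. Your proposed remedy (assume $h^1(\mo_{\ov{Y}_q})=0$, i.e.\ $h^1(\mo_Y)=0$, so the fiberwise deformation is unique as in Corollary \ref{br16}) does close this, but note it is an extra hypothesis not present in the Setup of Section \ref{bsec4}, so strictly it yields a version of the statement under an added assumption; the alternative you sketch --- that a $\mr{Pic}^0$-twist cannot raise $h^0$ above the value forced by $y\notin B$ --- is not carried out and is not obvious in general, since $h^0(\mc{L}\otimes\xi)$ can jump up for special $\xi\in\mr{Pic}^0(Y)$. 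Since the paper's own proof silently ignores exactly this ambiguity, I would regard your treatment as at least as complete as the original rather than as containing a new gap; the trade-off is that the paper's one-line conclusion is cleaner but rests on reading the Brill--Noether condition with respect to the given deformation $\mc{M}$, while your version makes the issue explicit at the cost of either an extra hypothesis or an unfinished lemma.
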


\begin{proof}
 Consider the short exact sequence
$$
0 \,\longrightarrow\, \I_{E_0} \,\stackrel{h}{\longrightarrow}\,
\mo_{Y \times Z}\,\longrightarrow\, \mo_{E_0}\,\longrightarrow\, 0\, .
$$
Let
\[ \ov{\pi}^* \I_{E_0} \,\stackrel{\ov{\pi}^*(h)}{\longrightarrow}\,
\mo_{\ov{Y}}\,\longrightarrow\, \mo_E \,\longrightarrow\, 0\]
be its pull back by $\ov{\pi}$. Note that the image of the homomorphism
$\ov{\pi}^*(h)$ is $\I_E$. Hence there is the short exact sequence
\[0 \,\longrightarrow\, \mo_{\ov{Y}}(-E) \,\longrightarrow\, \mo_{\ov{Y}}
\,\longrightarrow\, \mo_E \,\longrightarrow\, 0\, .\]
Tensoring it by $p_1^*\mc{L}$, 
\begin{equation}\label{br21}
 0 \,\longrightarrow\, \mc{M}\,\longrightarrow\, p_1^*\mc{L}
\,\longrightarrow\, p_1^*\mc{L} \otimes \mo_E \,\longrightarrow\, 0\, .
\end{equation}
Now, 
 \[p_1^* \mc{L} \otimes \mo_{\ov{Y}_q} \,\cong\, p_1(q)^* \mc{L}\, , \ \
p_1^* \mc{L} \otimes \mo_E \otimes \mo_{\ov{Y}_q} \,\cong\,
p_1(q)^* \mc{L} \otimes \mo_{E(q)}\, .\]
By Lemma \ref{br22}, the restriction $p_2|_E$ is flat. As $p_1^*\mc{L}$ is an
invertible sheaf on $\ov{Y}$ which is flat over $Z$ via $p_2$, it follows that
$(p_1^*\mc{L} \otimes \mo_E)_y$ is $\mo_{Z,q}$--flat for any $y\,\in \,\ov{Y}_q$. Hence,
$\mr{Tor}^1_{\mo_{Z,q}}((p_1^*\mc{L} \otimes \mo_E)_y,k(q))\,=\,0$
for any $y\,\in\, \ov{Y}_q$.
 Tensoring (\ref{br21}) by $\mo_{\ov{Y}_q}$, yields the short exact sequence
\begin{equation}\label{br23}
0\,\longrightarrow\, \mc{M}_q \,\longrightarrow\, p_1(q)^*\mc{L}
\,\longrightarrow\, p_1(q)^* \mc{L} \otimes \mo_{E(q)}\,\longrightarrow\, 0\, .
\end{equation}
As the morphism $p_1(q)$ is surjective, \[H^0(p_1(q)^* \mc{L} \otimes \mo_{E(q)})
\,=\,H^0(p_1(q)_*(p_1(q)^* \mc{L} \otimes \mo_{E(q)}))\]
and
\[
H^0(p_1(q)^* \mc{L})\,=\,H^0(p_1(q)_*(p_1(q)^* \mc{L}))\, .\]
Define $E_0(q)\,:=\,E_0 \cap Y \times q$. As $p_1(q)$ is the blow-up map of $Y$ along
$B_p \cup q$, it follows that $p_1(q)^*\mo_{E_0(q)} \,\cong\, \mo_{E(q)}$.
 Using the projection formula,
\[H^0(p_1(q)^* \mc{L} \otimes \mo_{E(q)})\,=\,H^0(\mc{L} \otimes \mo_{E_0(q)})\
 \mbox{ and }\ H^0(p_1(q)^* \mc{L})\,=\,H^0(\mc{L})\, .\]
The long exact sequence of cohomologies associated to \eqref{br23} contains
\[0 \,\longrightarrow\, H^0(\mc{M}_q) \,\longrightarrow\, H^0(\mc{L})
\,\stackrel{\rho(q)}{\longrightarrow}\, H^0(\mc{L} \otimes \mo_{E_0(q)})\, ,\]
where $\rho(q)$ is the natural evaluation/restriction map on $E_0(q)$.
By assumption, $\rho(q)\,=\,0$ if and only $q\,=\,p$. So,
$h^0(\mc{M}_q)<h^0(\mc{L})\,=\,h^0(\mc{M}_p)$ for any $q \,\not=\,p$. 

As $\mc{M}_q$ is the restriction of $\mc{M}$ to $\ov{Y}_q$, it follows that $\gamma$
deforms to $c_1(\mc{M}_q)$ because $\ov{Y}_p$ deforms to $\ov{Y}_q$ along the family
$p_2$. Hence, we have $\NL(\gamma)\,=\,Z$. By definition, $\mc{B}_{\mc{M}_p} \,\not=\, Z$.
This completes the proof.
\end{proof}

\begin{rem}\label{br11}
In the proof of Theorem \ref{br1} it was observed that 
$h^0((p_1^*\mc{L})|_{\ov{Y}_q})\,=\,h^0(\mc{L})$ for any closed point
$q \,\in\, Z$. By Grauert's upper semicontinuity theorem \cite[Corollary III.$12.9$]{R1},
this implies that for any ring homomorphism $\phi\,:\,\mo_{Y,p} \,\longrightarrow\,
k(p)[\epsilon]/(\epsilon^2)$, the homomorphism
\[
H^0(p_1^*\mc{L}) \otimes k(p)[\epsilon]/(\epsilon^2)
\,\longrightarrow\, H^0(p_1^*\mc{L} \otimes_\phi k(p)[\epsilon]/(\epsilon^2))
\] is an isomorphism. Since $h^0(p_1^*\mc{L})\,=\,h^0(p_{1_*}p_1^*\mc{L})
\,=\,h^0(\mc{L})$, this implies that
\[\dim_{k(p)[\epsilon]/(\epsilon^2))} H^0(p_1^*\mc{L} \otimes_\phi k(p)
[\epsilon]/(\epsilon^2))\,=\,h^0(\mc{L})\, .\]
\end{rem}

\begin{note}[Restriction to infinitesimal deformation]
 Fix a point $q \,\in\, Z$ and a tangent $t \,\in\, T_qZ$ corresponding to a ring
homomorphism $\phi\,:\,\mo_{Z,q} \,\longrightarrow\, k(q)[\epsilon]/(\epsilon^2)$.
Denote by $\ov{Y}_t$ the 
 infinitesimal deformation of $\ov{Y}_q$ along $t$, so $\ov{Y}_t$ is the fiber
product $\ov{Y} \times_Z \Spec k(q)[\epsilon]/(\epsilon^2)$ with respect to the morphism 
 $\Spec k(q)[\epsilon]/(\epsilon^2) \,\longrightarrow\, Z$ induced by $\phi$. Given any
sheaf $\mc{F}$ on $\ov{Y}$, denote by $\mc{F}_t$ the pull-back of $\mc{F}$ to $\ov{Y}_t$.
 In particular,
$$\mc{F}_t \,\cong\, \mc{F} \otimes_\phi k(q)[\epsilon]/(\epsilon^2)\, ,$$
where the sheaf $\mc{F} \otimes_\phi k(q)[\epsilon]/(\epsilon^2)$ is defined by
 \[(\mc{F} \otimes_\phi k(q)[\epsilon]/(\epsilon^2))_{y}
\,=\,\mc{F}_{y} \otimes_{\phi} k(q)[\epsilon]/(\epsilon^2)\]
 if $y \,\in\, \ov{Y}_q$ and zero otherwise (consider $k(q)[\epsilon]/(\epsilon^2)$ as
a constant sheaf supported on $\ov{Y}_q$).
 \end{note}
 
\begin{thm}\label{br14}
 There exist $t \,\in \,T_pZ$ and $D \,\in \,H^0(\mc{M}_p)$ such that
$$\rho_\pi(t) \,\not=\,0\, ,\ \ t \cup c_1(\mc{M}_p)\,=\,0$$ but
$t \lrcorner \{D\} \,\not=\, 0$. In particular, 
$t \,\in\, T_p\NL(c_1(\mc{M}_p))\,=\,T_pZ$, but $t \,\not\in\, T_p\mc{B}_{\mc{M}_p}$.
\end{thm}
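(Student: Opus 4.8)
The strategy is to exploit the contrast between the two conditions now available: $t\cup c_1(\mc{M}_p)=0$ is essentially automatic on $Z=\NL(\gamma)$, whereas $t\lrcorner\{D\}=0$ (by Proposition \ref{br3}) is equivalent to the lift of $D$ to an effective Cartier divisor on $\ov{Y}_t$, which in turn (by the long exact sequence of \eqref{phd11} applied to $\mc{M}_p$) is equivalent to surjectivity of $H^0(\mc{M}_{p,t})\to H^0(\mc{M}_p)$. Since by Theorem \ref{br1} the function $q\mapsto h^0(\mc{M}_q)$ drops strictly at every $q\neq p$, there should be a first-order obstruction detecting this drop, and that obstruction will be exactly $t\lrcorner\{D\}$ for a suitable $D$. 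So the first step is to choose $t\in T_pZ=T_pY$ with $\rho_\pi(t)\neq 0$ — any direction not killed by the Kodaira--Spencer map of $p_2$; since $p_2$ is a genuinely varying family (the center of the blow-up moves) such $t$ exists.

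Next I would translate the non-surjectivity of $H^0(\mc{M}_{p,t})\to H^0(\mc{M}_p)$ into the framework of Remark \ref{br11}. From the exact sequence \eqref{br21} restricted to the infinitesimal deformation $\ov{Y}_t$, and using that $p_1^*\mc{L}$ has constant $h^0$ over $Z$ (hence $H^0((p_1^*\mc{L})_t)=H^0(p_1^*\mc{L})\otimes k(p)[\epsilon]/(\epsilon^2)$ by Remark \ref{br11}), one gets a commutative diagram comparing the evaluation map $\rho(p)=0$ on $E_0(p)$ with the evaluation map $\rho(t)$ on the infinitesimal family $E_0(p)_t$ of centers. The point is that $E_0$ has a component, namely the part of $\Delta\cap(Y\times Z)$ sitting over a neighbourhood of $p$, whose fiber over $q$ is (roughly) the point $q$ itself; as $q$ moves along $t$, this component moves, so the evaluation-at-$q$ map $H^0(\mc{L})\to H^0(\mc{L}\otimes\mo_{E_0(q)})$ changes to first order. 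Concretely, if $s\in H^0(\mc{L})$ vanishes at $p$ (which holds for a nonzero $s$ since $p$ is a base point of multiplicity $1$, so $H^0(\mc{M}_p)\cong H^0(\mc{L})$ via the inclusion), then $s$ need not vanish along the deformed center: its first-order variation is $t(s)$ evaluated at $p$, i.e. essentially $ds_p(t)$, which is nonzero for generic $t$ because $p$ has multiplicity exactly $1$ in the base locus (the base locus is cut out transversally there). Choosing $D\in|\mc{M}_p|$ to be the divisor of such an $s$, this computation shows the restriction map $H^0(\mc{M}_{p,t})\to H^0(\mc{M}_p)$ misses $s$, hence is not surjective, hence $D$ does not lift to $\ov{Y}_t$.

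Then I would assemble the conclusions: by Proposition \ref{br3}, non-liftability of $D$ to an effective Cartier divisor on $\ov{Y}_t$ gives $t\lrcorner\{D\}\neq 0$; since $\NL(\gamma)=Z$ is smooth of dimension $\dim Z$ and $t\in T_pZ$, we have $t\cup c_1(\mc{M}_p)=0$ automatically (every tangent vector to $\NL$ lies in the kernel of the cup product by Remark \ref{br10} and the description of $T_o\NL(\gamma)$); and $\rho_\pi(t)\neq 0$ by the first step. Finally, Proposition \ref{br15} identifies $T_p\mc{B}_{\mc{M}_p}$ with $\rho_\pi^{-1}$ of the intersection of the kernels of $\lrcorner\{D'\}$ over all reduced $D'\in|\mc{M}_p|$; since our specific reduced $D$ has $\rho_\pi(t)\notin\ker\lrcorner\{D\}$, we get $t\notin T_p\mc{B}_{\mc{M}_p}$ while $t\in T_pZ=T_p\NL(c_1(\mc{M}_p))$, as claimed.

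**Main obstacle.** The delicate point is the explicit first-order computation showing that evaluation along the moving center is genuinely nonzero to order $\epsilon$ — that is, pinning down that the multiplicity-one hypothesis on $p$ forces $ds_p(t)\neq 0$ for some section $s$ vanishing at $p$ and some tangent direction $t$, and matching this variation precisely with $t\lrcorner\{D\}$ rather than merely with some obstruction class. One must be careful that $D$ can be taken reduced (so that Proposition \ref{br3} applies); this should follow from a Bertini-type argument for the linear system $|\mc{M}_p|$ on the blow-up $\ov{Y}_p$, possibly after checking the base locus of $\mc{M}_p$ is small enough, but it needs to be verified rather than assumed.
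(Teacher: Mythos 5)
Your proposal is correct and follows essentially the same route as the paper: use the multiplicity-one hypothesis at $p$ to pick $s\in H^0(\mc{L})$ whose local equation lies in $m_p\setminus m_p^2$, choose the tangent direction $t$ (i.e.\ the homomorphism $\phi\,:\,\mo_{Z,p}\to k(p)[\epsilon]/(\epsilon^2)$) so that the first-order evaluation of $s$ along the moving component $\Delta$ of $E_0$ is nonzero, deduce via Remark \ref{br11} and the sequence \eqref{br21} that $H^0(\mc{M}_t)\to H^0(\mc{M}_p)$ fails to be surjective, and combine this with $\NL(\gamma)=Z$ from Theorem \ref{br1}. The only difference is the endgame, and it dissolves both your flagged obstacles: rather than applying Proposition \ref{br3} to the specific (possibly non-reduced) divisor of $s$, the paper first concludes $t\notin T_p\mc{B}_{\mc{M}_p}$ directly from the first-order drop of $\dim_{k[\epsilon]/(\epsilon^2)}H^0(\mc{M}_t)$ below $h^0(\mc{M}_p)$ (the tangent-space criterion in the proof of Proposition \ref{br15}), and only then extracts from Proposition \ref{br15} the existence of some reduced $D$ with $t\lrcorner\{D\}\neq 0$, so no Bertini argument and no precise matching of the variation with $t\lrcorner\{D\}$ is needed.
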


\begin{proof}
 Given a morphism $\phi\,:\,\mo_{Z,p} \,\longrightarrow\,
k(p)[\epsilon]/(\epsilon^2)$, the following exact sequence is obtained by
tensoring (\ref{br21}) with $- \otimes_{\phi} k(p)[\epsilon]/(\epsilon^2)$:
\[0 \,\longrightarrow\, H^0(\mc{M} \otimes_\phi k(p)[\epsilon]/(\epsilon^2))
\,\longrightarrow\, H^0(p_1^*\mc{L} \otimes_\phi k(p)[\epsilon]/(\epsilon^2))\]
\[\stackrel{\rho(\phi)}{\longrightarrow}\, H^0(p_1^*\mc{L} \otimes
\mo_E \otimes_\phi k(p)[\epsilon]/(\epsilon^2))\, .\]
 It suffices to show that there exists a morphism $\phi$ such that
$\rho(\phi)$ is not the zero map. Indeed, given such a $\phi$, by Remark \ref{br11},
 \[\dim_{k(p)[\epsilon]/(\epsilon^2)} H^0(\mc{M} \otimes_\phi
k(p)[\epsilon]/(\epsilon^2))\,<\, \dim_{k(p)[\epsilon]/(\epsilon^2)}
H^0(p_1^*\mc{L} \otimes_\phi k(p)[\epsilon]/(\epsilon^2))\,=\,h^0(\mc{L})\, .\]
 In the proof of Theorem \ref{br1}
it was observed that $h^0(\mc{L})\,=\,h^0(\mc{M}_p)$ when
$\rho(p)\,=\,0$. This implies that for $t \,\in\, T_pZ$ corresponding to $\phi$, we have
$\rho(t) \,\not=\, 0$ and $t \,\not\in\, T_p\mc{B}_{\mc{M}_p}$. 
 By Proposition \ref{br15}, there exists $D \,\in\, H^0(\mc{M}_p)$ such that
$t \cup c_1(\mc{M}_p)\,=\,0$ but $t \lrcorner \{D\} \,\not=\, 0$. This will prove
the theorem.
 
As $p$ is a reduced base point of $H^0(\mc{L})$, there exist $s \,\in\, H^0(\mc{L})$,
$f_s \,\in\, m_p\backslash m_p^2$ and $g_s \,\in\, \mc{L}_p$ such that 
$s_p\,=\,f_sg_s$ and $g_s \,\not\in\, m_p\mc{L}_p$; here $s_p$ is the image of $s$
under the localization morphism $H^0(\mc{L}) \,\longrightarrow\, \mc{L}_p$. 
By assumption, $Y$ is smooth. Since $f_s \,\in\, m_p\backslash m_p^2$, we can choose
a regular sequence $(f_s,f_1,\cdots ,f_m)$ generating the maximal ideal $m_p$. 
Let $$\phi\,:\,\mo_{Z,p}
\,\longrightarrow\, k(p)[\epsilon]/(\epsilon^2)$$
be the ring homomorphism defined by
$1 \,\longmapsto\, 1$, $f_s \,\longmapsto\, \epsilon$ and $f_i \,\longmapsto\, 0$ for all
$i\,=\,1,\cdots ,m$. Then, $s$ defines a non-zero element
$$s \otimes 1 \,\in\, H^0(p_1^*\mc{L} \otimes_\phi k(p)[\epsilon]/(\epsilon^2))$$
and the image of $s \otimes 1 \,\in\, H^0(p_1^*\mc{L} \otimes_\phi k(p)[\epsilon]/(\epsilon^2))$ under the 
natural homomorphism \[\rho(\phi)'_p\,:\,H^0(p_1^*\mc{L} \otimes_\phi k(p)[
\epsilon]/(\epsilon^2)) \,\longrightarrow\, H^0(\mc{L}_p \otimes_\phi k(p)[\epsilon]/(\epsilon^2))\] is non-zero.
 
Now, $H^0(p_1^*\mc{L} \otimes \mo_E \otimes_\phi k(p)[\epsilon]/(\epsilon^2))
\,=\,H^0(\ov{\pi}^* (\pr_1^*\mc{L}) \otimes \ov{\pi}^*\mo_{E_0}
\otimes_\phi k(p)[\epsilon]/(\epsilon^2))$,
which by the projection formula is equal to
\[H^0(\pr_1^*\mc{L} \otimes \mo_{E_0} \otimes_\phi k(p)[\epsilon]/(\epsilon^2))\,=\,
\bigoplus\limits_{q \in B} H^0((\pr_1^*\mc{L} \otimes \mo_{E_0})_{q \times p}
\otimes_\phi k(p)[\epsilon]/(\epsilon^2))\, .\]
Observe that $(\pr_1^*\mc{L})_{q \times p} \,\cong\, \mc{L}_q$. Recall
that the composition $\Delta \,\hookrightarrow\, Y \times Y \,
\stackrel{\pr_1}{\longrightarrow}\,Y$
is an isomorphism, hence $\pr_1^{\#}\,:\,\mo_{Y,p} 
\,\stackrel{\sim}{\longrightarrow}\,
\mo_{\Delta,p \times p}$. Since the only irreducible component of $E_0$ containing
$p \times p$ is $\Delta$, we have 
 \[(\pr_1^*\mc{L} \otimes \mo_{E_0})_{p \times p} \otimes_\phi k(p)[\epsilon]/
(\epsilon^2) \,\cong\, \mc{L}_p \otimes_{\pr_1^{\#}} \mo_{\Delta, p \times p}
\otimes_\phi k(p)[\epsilon]/(\epsilon^2)\]
 \[\cong\, \mc{L}_p \otimes_{\pr_1^{\#}} \mo_{Y,p}
\otimes_\phi k(p)[\epsilon]/(\epsilon^2)
\,\cong\, \mc{L}_p \otimes_\phi k(p)[\epsilon]/(\epsilon^2)\, , \]
 where $M \otimes_{\pr_1^{\#}} N$ is a tensor product of
$\mo_{Y \times Z, p \times p}$--modules viewed as $\mo_{Y,p}$--modules under the
morphism $\pr_1^{\#}$.
Write the evaluation map $\rho(\phi)\,=\,\oplus_{q \in B} \rho(\phi)_q$, where $\rho(\phi)_q$ is the restriction of the evaluation map to $q$. 
Then $\rho(\phi)_p$ coincides with the morphism $\rho(\phi)'_p$ defined above. Since $\rho(\phi)'_p$ is non-zero, so is $\rho(\phi)$.
This completes the proof of the theorem.
\end{proof}
 
 \section{Applications to curve counting}
 
\begin{se}
Let $Y$ be a smooth projective surface and $\mc{L}$ an invertible sheaf on $Y$. Denote by
$r\,:=\,h^0(\mc{L})$. Fix $m$ distinct points $p_1,\cdots ,p_m$ on $Y$. 
Define \[W\,:=\,\{s \,\in\, H^0(\mc{L})\,\mid\, s(p_i)\,=\,0 \ \ \forall\ 
i\,=\,1,\cdots ,m\}\]
and
\[W_q\,:=\,\{s \,\in\, H^0(\mc{L})\,\mid\, s(p_i)\,=\,0\ \ \forall\ i\,=\,
2,\cdots ,m,\ \mbox{ and }\, s(q)\,=\,0\}\, .\]
For $r\,>\,0$, define
\[\mc{Z}_{\mc{L}}^r(p_1,\cdots,p_m)\,:=\,\{q \,\in\, Y\backslash \{p_2, p_3, ...,p_m\}
\,\mid\, \dim W_q \,\ge\, r\}\, .\]
 \end{se}

 Observe that $\mc{Z}_{\mc{L}}^r(p_1,\cdots ,p_m)$ is the set of points
$q \,\in\, Y\backslash \{p_2,\cdots ,p_m\}$ such that there exists at least an $r$ dimensional 
 family of curves in the linear system $|\mc{L}|$ which passes through the points
$q, p_2, p_3,\cdots ,p_m$. We will prove that the locus of such points
coincides with the Brill-Noether type locus defined in the previous section.
 
\begin{thm}\label{br18}
Notations as in Section \ref{bsec4}. Substitute $p\,=\,p_1$ and $B
\,=\,\{p_1,\cdots ,p_m\}$. For $r\,=\,\dim W$, we have
$\mc{Z}_{\mc{L}}^r(p_1,\cdots ,p_m)\,=\,\mc{B}_{\mc{M}_{p_1}}$.
\end{thm}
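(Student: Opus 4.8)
The plan is to rewrite both loci as conditions on the tautological line bundle $\mc{M}$ along the family $p_2\colon\ov{Y}\to Z$ from Section~\ref{bsec4}, and then match these against the definition of $\mc{B}_{\mc{M}_{p_1}}$. I would begin with the exact sequence established in the proof of Theorem~\ref{br1},
\[
0\longrightarrow H^0(\mc{M}_q)\longrightarrow H^0(\mc{L})\xrightarrow{\ \rho(q)\ }H^0(\mc{L}\otimes\mo_{E_0(q)}),
\]
valid for every closed point $q\in Z$. Under the identification $Y\times\{q\}\cong Y$, the subscheme $E_0(q)=E_0\cap(Y\times\{q\})$ is the reduced length-$m$ subscheme $\{p_2,\cdots,p_m\}\cup\{q\}$ of $Y$, so $\rho(q)$ is evaluation at these $m$ points and $H^0(\mc{M}_q)=\ker\rho(q)=W_q$. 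Taking $q=p_1$ (which lies in $Z$) gives $h^0(\mc{M}_{p_1})=\dim W_{p_1}=\dim W=r$, hence
\[
\mc{Z}_{\mc{L}}^r(p_1,\cdots,p_m)=\{\,q\in Z:\ h^0(\mc{M}|_{\ov{Y}_q})\ \ge\ h^0(\mc{M}_{p_1})\,\}.
\]

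For the inclusion $\mc{Z}_{\mc{L}}^r(p_1,\cdots,p_m)\subseteq\mc{B}_{\mc{M}_{p_1}}$ I would use that, as recorded in the proof of Theorem~\ref{br1}, $c_1(\mc{M}_{p_1})$ deforms along $p_2$ to $c_1(\mc{M}_q)$ for all $q$, so $\NL(c_1(\mc{M}_{p_1}))=Z$ and $\mc{M}$ itself is the sheaf $\widetilde{\mc{M}}$ of Remark~\ref{br10}. By upper semicontinuity the set $\mc{Z}:=\mc{Z}_{\mc{L}}^r(p_1,\cdots,p_m)$ is closed in $Z$ and contains $p_1$; taking it (with reduced structure) as the connected closed subscheme and $\mc{M}|_{\ov{Y}_{\mc{Z}}}$ as the invertible sheaf in the definition of $\mc{B}_{\mc{M}_{p_1}}$, conditions $(1)$ and $(2)$ of that definition are immediate from the first paragraph, so every point of $\mc{Z}$ lies in $\mc{B}_{\mc{M}_{p_1}}$. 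The delicate point is the \emph{connectedness} of $\mc{Z}$: from the description above, $\mc{Z}$ is either all of $Z=Y\setminus\{p_2,\cdots,p_m\}$ (when $p_1$ is not a base point of the subsystem $W'=H^0(\mc{L}\otimes\I_{\{p_2,\cdots,p_m\}})$, which is then visibly connected) or the common zero locus of $W'$ inside $Z$, in which case one must analyse the base locus of $|W'|$ (its fixed divisorial part together with its isolated base points) and the way it meets $Z$. This is the step I expect to be the main obstacle; as a sanity check, when $\{p_1,\cdots,p_m\}$ exhausts the base locus of $|\mc{L}|$ both sides collapse to $\{p_1\}$ by Theorem~\ref{br1}.

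For the reverse inclusion, let $q\in\mc{B}_{\mc{M}_{p_1}}$ be witnessed by a connected closed $T\subseteq Z$ through $p_1$ and $q$ and an invertible sheaf $\mc{N}$ on $\ov{Y}_T$ with $\mc{N}|_{\ov{Y}_{p_1}}\cong\mc{M}_{p_1}$ and $h^0(\mc{N}|_{\ov{Y}_w})\ge h^0(\mc{M}_{p_1})$ for all $w\in T$. I would compare $\mc{N}$ with $\mc{M}|_{\ov{Y}_T}$: since each $\ov{Y}_w$ is a blow-up of $Y$ at points, $h^1(\mo_{\ov{Y}_w})=h^1(\mo_Y)$, and when this vanishes the relative Picard scheme $\mathrm{Pic}_{\ov{Y}_T/T}$ is unramified over $T$, so the two sheaves, agreeing over $p_1$ and $T$ being connected, have isomorphic restrictions to every fibre; in particular $h^0(\mc{M}|_{\ov{Y}_q})\ge h^0(\mc{M}_{p_1})$, i.e. $q\in\mc{Z}_{\mc{L}}^r(p_1,\cdots,p_m)$ by the first paragraph. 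In general the same comparison is carried out inside $\mathrm{Pic}_{\ov{Y}_T/T}$, using the description of $\widetilde{\mc{M}}$ from Remark~\ref{br10} to identify the deforming sheaf. Combining the two inclusions yields $\mc{Z}_{\mc{L}}^r(p_1,\cdots,p_m)=\mc{B}_{\mc{M}_{p_1}}$.
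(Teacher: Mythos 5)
Your first paragraph is, in substance, the paper's entire proof: the paper also takes the exact sequence $0\to H^0(\mc{M}_q)\to H^0(\mc{L})\xrightarrow{\rho(q)}H^0(\mc{L}\otimes\mo_{E_0(q)})$ from the proof of Theorem \ref{br1}, identifies $\ker\rho(q)=W_q$, so $H^0(\mc{M}_q)=W_q$ and $H^0(\mc{M}_{p_1})=W$, and then simply asserts that the theorem ``follows directly'' from the definition of $\mc{B}_{\mc{M}_{p_1}}$. The two points you flag as delicate --- the connectedness of $\mc{Z}:=\mc{Z}^r_{\mc{L}}(p_1,\cdots,p_m)$, needed so that $\mc{Z}$ (with $\mc{M}|_{\ov{Y}_{\mc{Z}}}$) can serve as the witness pair in the definition of $\mc{B}_{\mc{M}_{p_1}}$, and the identification of an arbitrary deformation $\mc{L}_T$ of $\mc{M}_{p_1}$ with $\mc{M}|_{\ov{Y}_T}$ fibrewise, which uses discreteness of the Picard group (e.g.\ $h^1(\mo_Y)=0$) since otherwise the restrictions may differ by elements of $\mr{Pic}^0$ --- are passed over in silence by the paper, which tacitly equates $\mc{B}_{\mc{M}_{p_1}}$ with $\{q\in Z:\ h^0(\mc{M}_q)\ge h^0(\mc{M}_{p_1})\}$. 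So you have not missed any argument the paper actually supplies; you have been more scrupulous about the definition, and your scruples are legitimate: if the Setup of Section \ref{bsec4} is read literally, so that $\{p_1,\cdots,p_m\}$ is the base locus of $H^0(\mc{L})$, then $W=H^0(\mc{L})$ and both sides collapse to $\{p_1\}$ (your sanity check), whereas in the liberal reading with arbitrary distinct points the connectedness of $\mc{Z}$ can genuinely fail (for instance, for the pencil of plane quartics through $13$ general points, $\mc{Z}$ consists of $p_1$ together with two further isolated extra base points), in which case the asserted equality itself needs the component of $\mc{Z}$ through $p_1$ rather than all of $\mc{Z}$. In short: same approach as the paper, with your unresolved ``main obstacle'' being a gap in the paper's one-line conclusion rather than in your reconstruction of it.
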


\begin{proof}
By the proof of Theorem \ref{br1}, there is a short exact sequence
\[0 \,\longrightarrow\, H^0(\mc{M}_q)\,\longrightarrow\, H^0(\mc{L})
\, \stackrel{\rho(q)}{\longrightarrow}\,
H^0(\mc{L} \otimes \mo_{E_0(q)})\, .\]
Recall that $E_0(q)\,=\,B_p \cup q$ and $\rho(q)$ is the evaluation at $E_0(q)$. Then by
definition, $\ker \rho(q)\,=\,W_q$. Hence, $H^0(\mc{M}_q)\,=\,W_q$ and
$H^0(\mc{M}_{p_1})\,=\,W$. The theorem now follows directly. 
\end{proof}

\section*{Acknowledgements}

We would like to thank Dr. Nicola Tarasca for helpful discussions. IB acknowledges 
support of a J. C. Bose Fellowship.

\end{document}